\documentclass[11pt]{amsart}
\usepackage{amsfonts,latexsym,amsthm,amssymb,amsmath,amscd,euscript,tikz, tikz-cd}
\usepackage[alphabetic, msc-links, bibtex-style, nobysame]{amsrefs}
\BibSpec{collection.article}{%
    +{}  {\PrintAuthors}                {author}
    +{.} { }                            {title}
    +{.} { }                            {part}
    +{:} { }                            {subtitle}
    +{.} { \PrintContributions}         {contribution}
    +{.} { \PrintConference}            {conference}
    +{.} { \PrintBook}                  {book}
    +{.} { In: }                         {booktitle}
    +{,} { pages~}                      {pages}
    +{.} { }                            {publisher}
    +{,} { }                            {address}
    +{,} { \PrintDateB}                 {date}
    +{.} { \PrintTranslation}           {translation}
    +{.} { Reprinted in \PrintReprint}  {reprint}
    +{.} { }                            {note}
    +{.} {}                             {transition}
}
\usepackage{stackengine}
\usepackage{framed}
\usepackage{xfrac}
\usepackage[makeroom]{cancel}
\usepackage{faktor}
\usepackage{braket}
\usepackage{pgf,tikz,pgfplots}
\usepgfplotslibrary{groupplots}
\pgfplotsset{compat=1.18}
\usepgfplotslibrary{fillbetween} 
\usepackage{mathrsfs}
\usetikzlibrary{arrows}
\definecolor{wrwrwr}{rgb}{0.3803921568627451,0.3803921568627451,0.3803921568627451}
\definecolor{rvwvcq}{rgb}{0.08235294117647059,0.396078431372549,0.7529411764705882}
\definecolor{mblue}{rgb}{0.2, 0.3, 0.8}
\definecolor{morange}{rgb}{1, 0.5, 0}
\definecolor{mgreen}{rgb}{0.1, 0.4, 0.2}
\definecolor{mred}{rgb}{0.5, 0, 0}
\definecolor{ForestGreen}{RGB}{34,139,34}
\usepackage{float}

\numberwithin{equation}{section}

\usepackage{lmodern}
\usepackage{alphabeta}
\usepackage{supertabular}
\usepackage{amssymb}
\usepackage{enumerate}
\usepackage{stmaryrd}
\usepackage{bbm}
\usepackage{mathtools}
\usepackage{setspace}
\usepackage{tikz,tikz-cd}
\usetikzlibrary{matrix,calc,positioning,arrows,decorations.pathreplacing,patterns,knots}

\newcommand{\la}{\langle}
\newcommand{\rg}{\rangle}

\newtheorem{theorem}{{Theorem}}[section]
\newtheorem*{theorem*}{Theorem}
\newtheorem{lemma}[theorem]{Lemma}
\newtheorem{proposition}[theorem]{Proposition}

\newtheorem{corollary}[theorem]{Corollary}
\newtheorem*{corollary*}{Corollary}

\theoremstyle{definition}

\newtheorem{remark}{Remark}

\usepackage{lmodern,url,enumerate,mathtools}
\usepackage[hmargin = 1in,vmargin=1in]{geometry}
\usepackage{graphicx}
\usepackage{subcaption}

\usepackage{hyperref}
    \hypersetup{colorlinks=true,citecolor=ForestGreen,linkcolor = blue,urlcolor =black,linkbordercolor={1 0 0}}

\newcommand{\ve}{\varepsilon}

\newcommand{\mr}[1]{{\rm #1}}
\newcommand{\mres}{\mathbin{\vrule height 1.6ex depth 0pt width
0.13ex\vrule height 0.13ex depth 0pt width 1.3ex}}

\newcommand{\cA}{\mathcal{A}}
\newcommand{\cD}{\mathcal{D}}

\newcommand{\cH}{\mathcal{H}}

\newcommand{\cL}{\mathcal{L}}
\newcommand{\cM}{\mathcal{M}}
\newcommand{\cP}{\mathcal{P}}

\newcommand{\bG}{\mathbb{G}}

\newcommand{\bR}{\mathbb{R}}
\newcommand{\bS}{\mathbb{S}}\newcommand{\bT}{\mathbb{T}}

\newcommand{\bZ}{\mathbb{Z}}

\newcommand{\nc}{\newcommand}
\nc{\sn}{\mr{sn}}
\nc{\cn}{\mr{cn}}
\nc{\dn}{\mr{dn}}

\nc{\on}{\operatorname}
\nc{\p}{\partial}
\nc{\ol}{\overline}
\nc{\ul}{\underline}
\nc{\pa}{\partial}

\nc{\pb}{\partial_b}
\nc{\pc}{\partial_c}
\nc{\pd}{\partial_d}
\nc{\pe}{\partial_e}
\nc{\pf}{\partial_f}
\nc{\pg}{\partial_g}
\nc{\ph}{\partial_h}
\nc{\pari}{\partial_i}
\nc{\pj}{\partial_j}
\nc{\pk}{\partial_k}
\nc{\pl}{\partial_l}
\nc{\pell}{\partial_\ell}
\nc{\parm}{\partial_m}
\nc{\pn}{\partial_n}
\nc{\po}{\partial_o}
\nc{\pp}{\partial_p}
\nc{\pq}{\partial_q}
\nc{\pr}{\partial_r}
\nc{\ps}{\partial_s}
\nc{\pt}{\partial_t}
\nc{\pu}{\partial_u}
\nc{\pv}{\partial_v}
\nc{\pw}{\partial_w}
\nc{\px}{\partial_x}
\nc{\py}{\partial_y}
\nc{\pz}{\partial_z}

\allowdisplaybreaks

\numberwithin{equation}{section}
\makeatletter
\@addtoreset{equation}{section}
\makeatother

\mathtoolsset{showonlyrefs}

\author[Benjy Firester]{Benjy Firester}
\address{{ \href{mailto:benjyfir@mit.edu}{benjyfir@mit.edu}} \hfill Department of Mathematics, MIT}
\author[Raphael Tsiamis]{Raphael Tsiamis}
\address{
{\href{mailto:r.tsiamis@columbia.edu}{r.tsiamis@columbia.edu}} \hfill Department of Mathematics, Columbia University}

\title[The anisotropic Michael-Simon inequality]{The anisotropic Michael-Simon inequality} %
\date{\today}

\begin{document}

\begin{abstract}
We prove a Michael-Simon inequality for varifolds of every dimension and codimension with respect to anisotropic energies. In particular, this resolves the problem for every convex even hypersurface anisotropy and, using work of Allard, yields the regularity of hypersurfaces with bounded anisotropic mean curvature in every dimension. By the results of De Philippis and Pigati, it also implies density bounds and compactness properties for rectifiable varifolds with uniformly bounded anisotropic first variation. The proof relies on a projection method for anisotropic stress measures as well as the recent resolution of the vanishing mass conjecture by Gennaioli and Rindler.
\end{abstract}

\maketitle

\vspace{-0.2in}

\section{Introduction}

We prove a Michael-Simon inequality for varifolds in all dimensions and codimensions inside Euclidean space with respect to anisotropic energies.
Varifolds are generalizations of submanifolds and form the foundation of geometric measure theory.
For varifolds whose first variation with respect to the area functional is bounded, Allard proved deep and highly influential compactness, rectifiability, and regularity theorems~\cites{allard-first-variation , allard-regularity }.
Among other properties, this property relies on the monotonicity formula, which fails for energies defined by general anisotropic integrands; this failure makes density, compactness, and regularity properties central questions that require novel techniques~\cite{allard-monotonicity}.
In the absence of a monotonicity formula for anisotropic integrands, a Michael-Simon inequality~\cite{michael-simon} is the foremost tool for establishing such properties.
Despite breakthroughs on the isotropic Michael-Simon inequality~\cites{brendle-sharp-isoperimetric , brendle-eichmair }, the anisotropic problem has remained open.
In their beautiful recent work, De Philippis and Pigati~\cite{anisotropic-michael-simon} utilized ideas of Almgren to prove a Michael-Simon inequality for surfaces in $\bR^3$, with respect to anisotropies sufficiently close to the area functional.
They also showed, for integrands satisfying the atomic condition, that the Michael-Simon inequality is equivalent to compactness of rectifiable varifolds.

Our result applies to anisotropic integrands satisfying a very general condition, which includes a $C^1$ neighborhood of every convex even hypersurface anisotropy, as well as of every $\ell^p$ norm for $2$-varifolds in arbitrary codimension. 
For simplicity, we first state the result near the area functional.  

\begin{theorem}\label{thm:anisotropic-MS}
For every $2 \leq m < N$, there exist constants $\ve_{N,m}>0$ and $C_N<\infty$ with the following property.
For every $\Psi \in C^1(\bG(N,m) , (0,\infty) )$ with $\sup_{T \in \bG(N,m)} \|B_{\Psi}(T) - T \| < \ve_{N,m}$, every rectifiable $m$-varifold $V = v(M,\theta)$ in $\bR^N$ with finite total mass and finite first variation with respect to the anisotropy $\Psi$ satisfies
\[
\|V\| (\bR^N) \leq C_N \, \cH^m(M)^{\frac{1}{m}} \, |\delta_{\Psi} V|(\bR^N).
\]
In particular, if $\Theta^m(\|V\|,x) \geq \theta_0 > 0$ holds $\|V\|$-a.e.~$x$, then 
\[
\|V\|(\bR^N)^{\frac{m-1}{m}} \leq C_N \theta_0^{- \frac{1}{m}} |\delta_{\Psi} V|( \bR^N).
\]
\end{theorem}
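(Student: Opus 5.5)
The plan is to run a projection argument on the anisotropic stress tensor and let the Gennaioli--Rindler vanishing mass result supply the lower bound. For $V = v(M,\theta)$, the anisotropic first variation can be written as
\[
\delta_\Psi V(X) = \int \la B_\Psi(T_xM), DX(x)\rg \, d\|V\|(x),
\]
where $B_\Psi(T)$ is the stress matrix, equal to $\Psi(T)T$ plus a correction built from the tangential derivative of $\Psi$. Here $\|B_\Psi(T)-T\|$ measures the deviation from the area functional. Set $\mu = \|V\|$ and $S(x) = B_\Psi(T_xM)$. Then the matrix-valued measure $A = S\,\mu$ satisfies $\on{div} A = -\delta_\Psi V$ in the distributional sense, and $|\on{div} A|(\bR^N) = |\delta_\Psi V|(\bR^N)$ is finite.

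The first step is to reduce to a rank-deficient constraint that the vanishing mass machinery can handle. Since $S(x)$ lies within $\ve$ of the projection $T_xM$, its trace is about $m$, and on the orthogonal complement of $T_xM$ it is small. I would pick a fixed orthonormal frame and project $A$ onto suitable coordinate $(m{+}1)$-planes or $m$-planes $P$, taking $A_P = \pi_P A \pi_P$ or its trace part. Averaging over $P \in \bG(N,m)$, the projected measures should be positive-definite dominated, namely $\on{tr} A_P \ge c\,\mu$ for a positive fraction of $P$. They should also have divergence controlled by $|\delta_\Psi V|$ up to an error of size $\ve\mu$. Because the error is only of size $\ve\mu$, it can be absorbed once the final inequality is proved.

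The second step applies the Gennaioli--Rindler theorem, which resolves the vanishing mass conjecture. In the form I would use, a compactly supported positive-semidefinite matrix measure $A$ with $\on{div} A$ a finite measure satisfies
\[
\int (\det A)^{1/(n-1)} \le C\,|\on{div} A|(\bR^n)^{n/(n-1)},
\]
in the $n$-dimensional ambient space of the projection, applied here with $n$ tied to $m$. Via the Cauchy--Binet averaging trick, I would use this to bound the $\cH^m$-mass of the projections $\pi_P(M)$ weighted by the stress. I would then combine that bound with Hölder's inequality against $\cH^m(M)^{1/m}$, exactly as in the classical derivation of $\mu(\bR^N) \le C\,\cH^m(M)^{1/m}\, |\delta V|$. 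Compactness of support should follow by truncation using the finiteness of total mass.

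The second inequality in the theorem follows directly from the first. The density bound gives $\|V\| \ge \theta_0\, \cH^m \mres M$, so $\cH^m(M) \le \theta_0^{-1}\|V\|(\bR^N)$; substituting and rearranging yields it.

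The main obstacle is the second step, specifically producing from a rank-$m$ stress in $\bR^N$ a genuinely full-rank, uniformly elliptic matrix measure on which the vanishing mass estimate gives a nontrivial lower bound. A single $m$-plane stress is degenerate in $\bR^N$. I would first try adding, in each projected $(m{+}1)$-dimensional picture, a normal direction through a cone or slicing construction. Failing that, I would integrate the determinant estimate over the Grassmannian of projections so that the degeneracy averages out, using the smallness of $\ve_{N,m}$ to keep the projected stress comparable to the identity on $P$ for most $P$.
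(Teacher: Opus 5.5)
Your skeleton matches the paper's. You project onto $m$-planes, average over $\bG(N,m)$ with a Cauchy--Binet computation, use a compensated-compactness input, and derive the second inequality from $\cH^m(M)\le\theta_0^{-1}\|V\|(\bR^N)$, exactly as the paper does. The analytic core, however, has a genuine gap.

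\textbf{The determinant inequality does not apply.} The inequality $\int(\det A)^{1/(n-1)}\le C\,|\textup{Div}\,A|(\bR^n)^{n/(n-1)}$ is Serre's compensated integrability, not the Gennaioli--Rindler theorem. It holds only for \emph{symmetric positive semidefinite} fields, and it fails for general matrices. For example, in $\bR^2$ let $A$ have rows $\nabla^\perp u_1,\nabla^\perp u_2$ with $u\in C^\infty_c(\bR^2;\bR^2)$. Then $A$ is divergence-free, yet $\int(\det A)_+=\int(\det Du)_+>0$, e.g.\ for $u(x)=\chi(x)\,x$ with $\chi\equiv1$ near $0$.

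The projected stress $E_P^tB_\Psi(T)E_P$ is not symmetric, because of the term $T^\perp d\Psi(T)T$. When $T$ is nearly non-transverse to $P$ it is not positive semidefinite either, however small $\ve$ is. Since $\mathbf{A}_P$ sums all sheets of $V$ over each point of $P$, you cannot discard those sheets. So smallness of $\ve$ does not make the projected stress ``comparable to the identity on $P$''. Neither an extra normal direction nor averaging a determinant estimate over the Grassmannian repairs this.

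\textbf{Two further problems.} The trace is the wrong coercivity quantity: $\textup{tr}\,A_P\ge c\mu$ is compatible with rank-one, fully concentrating stresses. Also, an error $\ve\|V\|$ in the divergence could not be absorbed. It would produce a term $C\ve\,\cH^m(M)^{1/m}\|V\|(\bR^N)$, which dominates $\|V\|(\bR^N)$ at large scales.

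For $\Psi\equiv1$ your route does work: Minkowski's determinant inequality adds the sheets and H\"older closes the estimate. But that is exactly the case where symmetry and positivity come for free.

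\textbf{The missing idea: a concave barrier.} Take $\phi=\lambda_m(\textup{sym}\,\cdot)$. It is concave, $1$-homogeneous and $1$-Lipschitz, and it is $\le 0$ on the wave cone $\{\det A=0\}$ of the non-symmetric row divergence. Set $\Phi(\mathbf{A})=\int\phi(d\mathbf{A}/d\sigma)\,d\sigma$.
\begin{enumerate}[(1)]
\item Jensen's inequality over the fibres of $\pi_P$ gives $\Phi(\mathbf{A}_P)\ge\int\phi(E_P^tB_\Psi(T)E_P)\,dV$. So sheets cannot cancel, and each sheet loses at most $\ve$ in passing from $T$ to $B_\Psi(T)$.
\item The Haar average $a_{N,m}=\int_{\bG(N,m)}\lambda_m(E_P^tTE_P)\,dP$ is at least $\binom{N}{m}^{-1}$. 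This uses Cauchy--Binet and $\lambda_{\min}\ge\det$ when all eigenvalues lie in $[0,1]$.
\item Hence some single plane $P$ satisfies $\Phi(\mathbf{A}_P)\ge(a_{N,m}-\ve)\|V\|(\bR^N)$. Here the $\ve$-error is homogeneous of the same degree as $\|V\|$, so it is harmless.
\item The pushforward obeys the exact bound $|\textup{Div}\,\mathbf{A}_P|(\bR^m)\le|\delta_\Psi V|(\bR^N)$, obtained by testing with $X\circ\pi_P$; there is no $\ve\|V\|$ term. It is also concentrated on a set of Lebesgue measure at most $\cH^m(M)$.
\item The Gennaioli--Rindler estimate (their Proposition~3.5) applies to any such barrier, but only qualitatively. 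The paper first makes it unweighted. By contradiction and rescaling, it then becomes the scale-invariant bound $\cL^m(E)^{1/m}|\textup{Div}\,\mathbf{A}|(\bR^m)\ge c(m,\phi,L,\eta)$ whenever $|\mathbf{A}|(\bR^m)\le L$ and $\Phi(\mathbf{A})\ge\eta$.
\end{enumerate}
That scale-invariant bound replaces your determinant inequality, and it is why $C_N$ is not explicit.
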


This theorem is a particular instance of a more general framework, valid for every $C^1$ anisotropy satisfying a spectral condition that is open in the $C^1$ topology; we formulate this general result as Theorem~\ref{thm:most-general-michael-simon}. 
The general spectral condition established in our comprehensive Theorem~\ref{thm:most-general-michael-simon} implies the Michael-Simon inequality for every anisotropy whose stress tensor $B_{\Psi}$ has positive average, under conjugation by elements on $\bG(N,m)$, upon applying a concave one-homogeneous function $\phi$.
In Section~\ref{section:smallest-eigenvalue}, we introduce such functions, which we call \textbf{concave barriers} after the terminology of~\cite{vanishing-mass-conjecture}*{\S 2}.
This general framework leads to several new applications and Michael-Simon inequalities for natural anisotropic energies.
Notably, we prove the following.
\begin{theorem}\label{thm:even-convex}
Let $\Psi_0 \in C^1( \bG(m+1,m))$ be a positive, convex, even $C^1$ hypersurface anisotropy.
There exists an $\ve_0 = \ve_0(\Psi_0)$ such that for every $\Psi \in C^1(\bG(m+1,m))$ with $\| \Psi - \Psi_0\|_{C^1} \leq \ve_0$, not necessarily convex, and every rectifiable $m$-varifold $V = v(M,\theta)$ in $\bR^{m+1}$ with finite total mass and finite first variation with respect to $\Psi$, it holds that
\[
\|V\|(\bR^{m+1}) \leq C(m,\Psi_0) \, \cH^m(M)^{\frac{1}{m}} |\delta_{\Psi} V| (\bR^{m+1}).
\]
\end{theorem}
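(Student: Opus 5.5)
Theorem~\ref{thm:even-convex} follows from the general spectral criterion, Theorem~\ref{thm:most-general-michael-simon}, which is stated later in the paper. The proof has three steps. First, check the spectral condition for $\Psi_0$ itself. Second, use openness of that condition in the $C^1$ topology to cover all $\Psi$ with $\|\Psi-\Psi_0\|_{C^1}\le\ve_0$. Third, read off the constant, which depends only on $m$ and $\Psi_0$.

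\textbf{Stress tensor.} For a hypersurface anisotropy, identify $T\in\bG(m+1,m)$ with its unit normal $\nu$. Write $\Psi(T)=F(\nu)$, where $F$ is the one-homogeneous even extension of $\Psi$ to $\bR^{m+1}$. The stress tensor then has the standard form
\[
B_\Psi(T)=F(\nu)\,\mathrm{Id}-\nu\otimes\nabla F(\nu).
\]
This is a possibly non-symmetric matrix. Its action on $\nu^\perp$ is $F(\nu)$ times the identity, and it has one further eigenvalue, $F(\nu)-\nabla F(\nu)\cdot\nu=0$ by Euler's identity.

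\textbf{Spectral condition for $\Psi_0$.} The condition asks for a concave barrier $\phi$ of the kind constructed in Section~\ref{section:smallest-eigenvalue} such that $\phi$ is positive on every average $\int O B_{\Psi_0}(T)O^{-1}\,d\mu$, taken over conjugations by elements of $\bG(m+1,m)$ and probability measures $\mu$. Evenness of $F$ makes the averaged off-diagonal contributions $\nu\otimes\nabla F(\nu)$ symmetrize under $\nu\mapsto-\nu$. Convexity of $F$ gives $\nabla F(\nu)\cdot w\le F(w)$ for all $w$. Together these should keep the averaged tensor in the region where $\phi>0$, with a quantitative lower bound depending on $\min F$ and $\max F$. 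This is the step where the Gennaioli--Rindler resolution of the vanishing mass conjecture enters. I would obtain the barrier $\phi$ by adapting their concave barrier for convex even integrands: averaging of stress tensors of convex even norms cannot produce a degenerate (zero-$\phi$) configuration.

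\textbf{Openness and conclusion.} If $\|\Psi-\Psi_0\|_{C^1}\le\ve_0$, then $\|B_\Psi-B_{\Psi_0}\|_{L^\infty}\lesssim\ve_0$, since $B_\Psi$ involves only $\Psi$ and its first derivatives. The averaged tensors move by at most $C\ve_0$, and $\phi$ is Lipschitz. So the positive lower bound persists once $\ve_0$ is small compared with the margin obtained for $\Psi_0$; no convexity of $\Psi$ is needed. Applying Theorem~\ref{thm:most-general-michael-simon} with the resulting uniform constant gives the inequality with $C(m,\Psi_0)$.

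\textbf{Main obstacle.} The hard step is producing the concave barrier for a general convex even $\Psi_0$ with a quantitative margin. I would first try taking $\phi$ to be a smallest-eigenvalue-type functional of the symmetrized part of the tensor, restricted to the relevant cone. I would then verify positivity by duality with the $F$-norm: pair the averaged tensor against $\nabla F^*$ and use convexity to bound the pairing from below by a multiple of $\min F$.
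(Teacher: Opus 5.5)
\textbf{There is a genuine gap.} Your overall architecture matches the paper: verify a positivity condition for $\Psi_0$, perturb in $C^1$ using $\sup_S\|B_\Psi(S)-B_{\Psi_0}(S)\|\le 2\|\Psi-\Psi_0\|_{C^1}$ and the Lipschitz bound on barriers, then invoke the general machinery. But the one step with real content is left undone: producing a concave barrier with a uniform positive margin for an arbitrary convex even $\Psi_0$. The route you sketch for it would not work.

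\textbf{Why the symmetrized smallest eigenvalue fails.} That functional is essentially the barrier $\lambda_m$ behind Theorem~\ref{thm:anisotropic-MS}, and it only works near the area functional. Write $a=E_P^t\nu$ and $b=E_P^t\, dF_0(\nu)$. Then
\[
\lambda_m(E_P^tB_{F_0}(\nu)E_P)=F_0(\nu)-\tfrac12\bigl(\langle a,b\rangle+|a||b|\bigr).
\]
When $P$ contains both $\nu$ and $dF_0(\nu)$, this equals $\tfrac12(F_0(\nu)-|dF_0(\nu)|)$, which is negative unless $dF_0(\nu)\parallel\nu$. For strongly anisotropic $F_0$ such negative values are typical in $P$, so there is no reason for a positive average. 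This is exactly why Theorem~\ref{thm:anisotropic-MS} needs $\sup\|B_\Psi(T)-T\|<\ve_{N,m}$.

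\textbf{Other errors in the plan.}
\begin{itemize}
\item Gennaioli--Rindler plays no role in this step. It enters only through Proposition~\ref{prop:GR-barrier-compactness} and Corollary~\ref{cor:concave-barrier}, already packaged in Theorem~\ref{thm:most-general-michael-simon}. The barrier verification is elementary convex analysis.
\item You misstate the condition. What is needed is $\inf_T\int\phi(E_P^tB_\Psi(T)E_P)\,d\rho(P)>0$, or its finite-sum variant. Positivity of $\phi$ on averages over tangent-plane measures $\mu$ is not required, since Jensen across sheets is already handled by Lemma~\ref{lemma:concavity-AP}.
\item Your evenness remark is off. Since $\nabla F$ is odd, $\nu\otimes\nabla F(\nu)$ is invariant under $\nu\mapsto-\nu$, so nothing symmetrizes.
\end{itemize}

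\textbf{The missing idea.} Replace the Euclidean pairing $\langle Az,z\rangle$ by the $F_0$-duality pairing, and work with plane-dependent barriers on the restrictions $B|_P:P\to\bR^{m+1}$:
\[
\phi_P(A):=\inf\{dF_0(z)[Az]: z\in P,\ F_0(z)=1\}.
\]
\begin{itemize}
\item $\phi_P$ is concave, one-homogeneous and Lipschitz. It satisfies $\phi_P(A)\le 0$ when $\mathrm{rank}\,A<m$, by taking $z\in\ker A$.
\item Convexity plus evenness give $|dF_0(\nu)[y]|\le F_0(y)$; this, not any symmetrization, is where evenness is used. Hence $dF_0(z)[B_{F_0}(\nu)z]=F_0(\nu)-dF_0(\nu)[z]\,dF_0(z)[\nu]\ge 0$, so $\phi_P(B_{F_0}(\nu)|_P)\ge 0$ for every $P$.
\item Strict positivity comes from $P_\nu:=\ker dF_0(\nu)$, on which $B_{F_0}(\nu)$ acts as $F_0(\nu)\,\mathrm{Id}$. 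This gives $\phi_{P_\nu}(B_{F_0}(\nu)|_{P_\nu})=F_0(\nu)>0$. Note the plane is not $\nu^\perp$ as you wrote: in the paper's convention $B_{F_0}(\nu)\nu=0$.
\item Compactness of $\bS^m$ then yields finitely many hyperplanes $P_1,\dots,P_J$ with $\min_\nu\sum_i\phi_{P_i}(B_{F_0}(\nu)|_{P_i})>0$.
\end{itemize}
This is a finite sum of different barriers on $\bR^{(m+1)\times m}$-valued measures. So you need Remark~\ref{rmk:jensen}$(ii)$--$(iii)$, not Theorem~\ref{thm:most-general-michael-simon} with a single $\phi$ on $m\times m$ compressions. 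Your closing suggestion to ``pair against $\nabla F^*$'' points vaguely in this direction. Without the barrier and the two inequalities above, however, the proof is not there.
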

Combined with Corollaries~\ref{cor:allard} and~\ref{cor:density}, this result resolves the question of a Michael-Simon inequality, therefore also density estimates, compactness, rectifiability, and regularity, for every convex even hypersurface anisotropy.
Interestingly, we do not require $\Psi$ itself to be even or convex.

We also prove the following result for anisotropic integrands based on the $\ell^p$ norms.
\begin{theorem}\label{thm:most-important-application}
The Michael-Simon inequality of Theorem~\ref{thm:anisotropic-MS}, and hence the consequences of Corollary~\ref{cor:density}, for $2$-varifolds in $\bR^N$ with respect to the $\ell^p$ anisotropy, for every $N \geq 3$ and $p \in (1,\infty)$.
\end{theorem}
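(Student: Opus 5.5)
The plan is to deduce Theorem~\ref{thm:most-important-application} from the general spectral criterion, Theorem~\ref{thm:most-general-michael-simon}. For $m=2$ it asks for a concave, one-homogeneous barrier $\phi$ on symmetric matrices such that the $O(N)$-average of $\phi$ applied to the conjugated stress tensor $B_\Psi(T)$ is positive. So for $\Psi_p(T)$, the $\ell^p$ anisotropy, the work is to compute $B_{\Psi_p}$ explicitly and then verify the spectral condition for every $p\in(1,\infty)$ and $N\ge 3$.

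\textbf{Step 1: the stress tensor.} For $T\in\bG(N,2)$ spanned by an orthonormal frame $e_1,e_2$, use the Plücker description: $\Psi_p(T)$ is the $\ell^p$ norm of the bivector $e_1\wedge e_2$, i.e.\ of the vector of $2\times 2$ minors $(e_1^i e_2^j - e_1^j e_2^i)_{i<j}$. Write $B_\Psi(T) = \Psi(T)\,T + (\text{terms from } d\Psi)$, with the tangential derivative contribution acting via $T^\perp\otimes T$. Differentiating the $\ell^p$ norm gives $d\Psi_p$ in terms of $|\xi_{ij}|^{p-2}\xi_{ij}$. The key structural point is that $B_{\Psi_p}(T)$ is nonnegative on $T$ in the sense that $\operatorname{tr}(T\,B_{\Psi_p}(T)) = m\Psi_p(T)>0$. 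This follows from one-homogeneity (Euler's identity) of the norm on $\Lambda^2\bR^N$.

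\textbf{Step 2: the barrier.} Choose $\phi$ among the concave barriers of Section~\ref{section:smallest-eigenvalue} so that $\phi(B)$ depends only on the compression of $B$ to $T$ and is controlled by a trace-type quantity there. A candidate is the smallest eigenvalue of the $T$-block corrected by the off-diagonal part, as in \cite{vanishing-mass-conjecture}. Then positivity of the average reduces to showing that the averaged tensor $\int_{O(N)} Q^{T} B_{\Psi_p}(QT)\,Q\,dQ$ is positive definite in the appropriate cone. By $O(N)$-invariance of the averaging, this average is a multiple of the identity plus a term proportional to the projection. Its coefficient is computable as an integral of $\Psi_p$ and of $\langle d\Psi_p, \cdot\rangle$ over the Grassmannian.

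\textbf{Step 3: the main obstacle.} I expect the difficulty to lie here. The normal–tangential terms of $d\Psi_p$ do not vanish under averaging in a trivially signed way, so one must show that the scalar coefficient stays positive uniformly in $p\in(1,\infty)$ and $N\ge 3$. I would first try to use convexity of the $\ell^p$ norm on $\Lambda^2\bR^N$ together with the fact that, for $m=2$, the relevant ellipticity reduces to a two-dimensional calculation. In that calculation the convex barrier of Gennaioli--Rindler shows that a nonpositive average would force a vanishing-mass configuration, which is impossible. Failing that, I would compute directly. Using symmetry of coordinates, the Grassmannian integral reduces to expectations over Gaussian $2$-frames, and the needed inequality becomes $\mathbb E|\xi_{12}|^p > c\,\mathbb E[\,|\xi_{12}|^{p-2}\xi_{12}\,\eta\,]$ for explicit Gaussian quantities. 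This can be checked via Hölder's inequality, with strictness coming from $p\neq 1,\infty$.

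Once positivity is established for each $p$, openness of the spectral condition in $C^1$ together with Theorem~\ref{thm:most-general-michael-simon} gives the inequality, and Corollary~\ref{cor:density} then yields the density and compactness consequences.
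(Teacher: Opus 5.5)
Your proposal has a genuine gap. It never exhibits a concave barrier for which the spectral condition of Theorem~\ref{thm:most-general-michael-simon}, or its finite-sum form in Remark~\ref{rmk:jensen}$(ii)$, is verified. The reduction in Step~2 also could not establish that condition even if completed.

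The condition is $\inf_{T}\int\phi(E_P^tB_{\Psi_p}(T)E_P)\,d\rho(P)>0$. For each fixed tangent plane $T$, the barrier is evaluated on the compression of $B_{\Psi_p}(T)$ to projection planes $P$, which is where the term $T^{\perp}\,d\Psi_p(T)\,T$ becomes visible. The compression to $T$ itself is just $\Psi_p(T)$ times the identity. So your identity $\operatorname{tr}(TB_{\Psi_p}(T))=m\Psi_p(T)$ follows automatically from~\eqref{eqn:BPsi(T)} and carries no information.

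Your substitute criterion, positive definiteness of an $O(N)$-averaged stress tensor, is vacuous. Averaging $E_0^tQ^tB_{\Psi}(T)QE_0$ over $Q\in O(N)$ (compression to a Haar-random plane) gives $\frac{m}{N}\Psi(T)\,I_m$, by Schur's lemma and your trace identity. Averaging $Q^tB_{\Psi}(QTQ^t)Q$ instead gives, by linearity and equivariance of $\Psi\mapsto B_\Psi$, the stress $\bar\Psi\,T$ of the constant anisotropy $\bar\Psi=\int\Psi(QTQ^t)\,dQ$. Either way the result is positive for every positive anisotropy, so it encodes nothing about $\Psi_p$.

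The underlying reason is that $\phi$ is concave. Jensen's inequality gives $\int\phi(X_P)\,d\rho\le\phi\bigl(\int X_P\,d\rho\bigr)$, which is the opposite of the direction you need. Step~3 is a list of possibilities rather than an argument. In particular, using the vanishing mass theorem to exclude a nonpositive average is circular: that theorem enters only after a barrier with positive average has been found. Note also that the barrier must be defined and nonpositive on singular matrices in all of $\operatorname{End}(\bR^2)$, not only on symmetric matrices, since the projected stresses are not symmetric for $p\neq 2$.

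The missing idea is to replace Haar averaging by the finitely many coordinate planes $P_{ij}=\operatorname{span}\{e_i,e_j\}$, which are adapted to the $\ell^p$ structure. On these you use the barrier $\phi_p(A)=\inf_{\|z\|_p=1}\langle Az,J_p(z)\rangle$ with $J_p(z)=(|z_1|^{p-2}z_1,|z_2|^{p-2}z_2)$. It is concave and one-homogeneous as an infimum of linear functionals, and $\phi_p(A)\le 0$ when $\det A=0$ by choosing $z\in\ker A$.

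Differentiating the Pl\"ucker $\ell^p$ norm gives
\[
B_{ij}(T)=\tilde\Psi(\tau)^{1-p}\Bigl(|\tau_{ij}|^pI_2+\sum_{k\notin\{i,j\}}J_p(v^{ij}_k)\otimes v^{ij}_k\Bigr),\qquad v^{ij}_k=(\tau_{ik},\tau_{jk}).
\]
The elementary two-dimensional sign inequality $\langle z,v\rangle\langle J_p(z),J_p(v)\rangle\ge 0$ controls the cross terms; this is exactly where $m=2$ is used. It yields $\phi_p(B_{ij}(T))\ge\tilde\Psi(\tau)^{1-p}|\tau_{ij}|^p$. Summing over $i<j$ gives $\sum_{i<j}\phi_p(B_{ij}(T))\ge\tilde\Psi(\tau)=\|\tau_T\|_p\ge\binom{N}{2}^{\min\{1/p-1/2,\,0\}}$. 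This is the uniform positivity required by Remark~\ref{rmk:jensen}$(ii)$ and Theorem~\ref{thm:most-general-michael-simon}.

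Nothing in your plan controls the cross terms $J_p(v^{ij}_k)\otimes v^{ij}_k$, which for $p$ far from $2$ are not small perturbations of $T$. Your Gaussian H\"older inequality is not attached to any specific barrier, so even if it held it would not verify the condition. The positivity has to come from a barrier tailored to the $\ell^p$ geometry, not from an averaged or perturbative estimate.
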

De Philippis-Pigati~\cite{anisotropic-michael-simon }*{Theorem 1.4} proved Theorems~\ref{thm:even-convex} and~\ref{thm:most-important-application} when $N=3$, for the $\ell^p$ anisotropy and for anisotropic integrands that are reflection-symmetric in every coordinate.

Theorem~\ref{thm:even-convex} immediately implies the density lower bound assumption in Allard's anisotropic regularity Lemma~\cite{allard-regularity}*{1, Basic regularity Lemma, Assumption (1)}.
Using the regularity theorem~\cite{allard-regularity} for codimension-$1$ varifolds in the anisotropic setting of~\cite{allard-regularity}, we obtain the following.
\begin{corollary}\label{cor:allard}
Let $\Psi_0 \in C^1( \bG(m+1,m))$ be a positive, convex, even $C^1$ hypersurface anisotropy.
There exists an $\ve_0 = \ve_0(m,\Psi_0)$ such that for every $\Psi \in C^1(\bG(m+1,m))$ with $\| \Psi - \Psi_0\|_{C^1} \leq \ve_0$, the following property holds.
Let $r>0$, let $x_0\in\bR^{m+1}$, let $P_0\in\bG(m+1,m)$, and let $V$ be an integral $m$-varifold in
$B_{2r}(x_0)$ with $x_0 \in \textup{spt} \, \|V\|$.
Suppose that
\begin{align*}
|\delta_{\Psi} V| \leq \Lambda \|V\| \quad \text{for } \; 0 \leq r \Lambda < \ve_0&, \qquad \frac{\|V\|(B_{2r}(x_0))}{\omega_m (2r)^m} \in \Bigl[ \frac{1}{2}, \frac{3}{2} \Bigr], \\
\int_{B_{2r}(x_0)} | \pi_{P_0^{\perp}}(x - x_0)|^2 \, d \|V\|(x) &\leq \ve_0 r^{m+2}.
\end{align*}
Then, in the ball $B_r(x_0)$, the varifold $V$ agrees with the multiplicity-one varifold associated to the graph of a $C^{1,\alpha}$ function $u$ over the affine plane $x_0 + P_0$, for every $\alpha \in (0,1)$.
Moreover,
\[
r^{-1} \|u \|_{C^0} + \|Du \|_{C^0} + r^{\alpha} [ Du]_{C^{0,\alpha}} \leq \eta_{\alpha}(t), \qquad \eta_{\alpha}(t) \to 0 \quad \text{as } \; t \downarrow 0,
\]
when the corresponding mass excess, first variation, and tilt-excess are bounded by $t$.
\end{corollary}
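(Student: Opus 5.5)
The plan is to deduce Corollary~\ref{cor:allard} from Allard's anisotropic regularity theorem~\cite{allard-regularity}. The only hypothesis of that theorem not supplied directly by the corollary's assumptions is the uniform density lower bound in the Basic regularity Lemma, Assumption (1), and we obtain it from Theorem~\ref{thm:even-convex}. We first fix $\ve_0$ no larger than the constant of Theorem~\ref{thm:even-convex}. We also fix $\ve_0$ small enough that Allard's ellipticity requirements hold uniformly for all $\Psi$ in the $C^1$-ball around $\Psi_0$. Here we use that $\Psi_0$ is convex and even, and we shrink $\ve_0$ further if Allard's lemma needs closeness to an elliptic integrand.

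The key step is a localized Michael-Simon inequality giving a lower density bound. For $y \in \textup{spt}\,\|V\| \cap B_{3r/2}(x_0)$ and $s< r/2$, apply Theorem~\ref{thm:even-convex} to the cut-off varifold $V \mres \chi_s$. Here $\chi_s$ is a Lipschitz radial cutoff equal to $1$ on $B_s(y)$ and supported in $B_{s+h}(y)$. Its $\Psi$-first variation is bounded by $|\delta_\Psi V|(B_{s+h}(y))$ plus a boundary term of order $h^{-1}\|V\|(B_{s+h}\setminus B_s)$, with constant depending on $\sup|\Psi|+|D\Psi|$. Letting $h\downarrow 0$ and using integrality (multiplicity $\ge 1$, so $\cH^m(M\cap B_s)\le \|V\|(B_s)$), we get for $f(s)=\|V\|(B_s(y))$:
\[
f(s) \le C\, f(s)^{1/m}\bigl(\Lambda f(s) + C f'(s)\bigr).
\]
Since $\Lambda s\le r\Lambda<\ve_0$ and $f(s) \le f(2r)\lesssim r^m$, the term $\Lambda f^{1+1/m}$ is absorbed for $\ve_0$ small. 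It is at most $C\ve_0 (f/s)\, f^{1/m}$, and we use the a priori upper bound $f(s)\le C s^m$ from the mass hypothesis if needed, or run the argument on the set where $f\le c s^m$ fails to hold. This leaves $f^{(m-1)/m}\le C f'$ a.e. Integration then gives $f(s)\ge c(m,\Psi_0) s^m$ for all $y\in\textup{spt}\|V\|$, which is exactly Allard's density assumption. At this point density upper semicontinuity of integral varifolds is used to make sense of $y$ in the support.

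With the density lower bound established, the remaining hypotheses of Allard's lemma are the smallness of $\Lambda r$, mass ratio near one, and small height/tilt excess with respect to $P_0$. These are exactly those listed in the corollary, after shrinking $\ve_0$. Allard's theorem then yields graphicality over $x_0+P_0$ in $B_r(x_0)$ with multiplicity one and $C^{1,\alpha}$ estimates decaying with the excesses. The bound $r^{-1}\|u\|_{C^0}+\|Du\|_{C^0}+r^\alpha[Du]_\alpha\le\eta_\alpha(t)$ follows by the usual compactness/contradiction argument once the density bound holds uniformly. The main obstacle I expect is the absorption of the $\Lambda$ term in the ODE argument. It needs the first variation to be measured against $\|V\|$ with the sharp scaling $r\Lambda<\ve_0$, and cutting off must not destroy finiteness of mass. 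Finite mass is guaranteed since $\|V\|(B_{2r})<\infty$.
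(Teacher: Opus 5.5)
Your route is the paper's: Theorem~\ref{thm:even-convex} supplies the density lower bound assumed in Allard's basic regularity lemma, and Allard's theorem then gives graphicality and the estimates. Your cutoff argument is the standard derivation the paper calls immediate, and it is correct if you absorb the $\Lambda$-term the first way you state it: $C\Lambda f(s)^{1/m}\leq C\Lambda\,\|V\|(B_{2r}(x_0))^{1/m}\leq C_m r\Lambda\leq C_m\ve_0$.

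Drop your fallback alternatives. An a priori bound $f(s)\leq Cs^m$ at small scales, and upper semicontinuity of the density, both need a monotonicity formula, which is unavailable here. Neither is needed. Since $y\in\textup{spt}\,\|V\|$, you have $f>0$ on $(0,r/2)$. Since $f$ is nondecreasing, integrating $(f^{1/m})'\geq c$ a.e.\ gives $f(s)\geq c^m s^m$ even if $f$ jumps.

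The step that fails is your claim that shrinking $\ve_0$, together with convexity and evenness of $\Psi_0$, makes Allard's ellipticity requirements hold uniformly on the $C^1$-ball around $\Psi_0$. Ellipticity is a strict, uniform convexity condition. Convexity does not imply it, and $C^1$-small perturbations do not preserve it; the same goes for the extra smoothness Allard requires of the integrand.

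This affects the conclusion itself. Suppose the one-homogeneous extension $F$ of $\Psi$ satisfies $F(\nu)=\langle w,\nu\rangle$ on an open cone of normals. This happens for convex, even, $C^1$ integrands, e.g.\ the support function of a strictly convex symmetric body with a vertex. It can also be produced from any $\Psi_0$ by an arbitrarily $C^1$-small modification near $\pm\nu_0$: replace $F_0$ there by its tangent linear function using a cutoff at angular scale $\delta$. Then every Lipschitz graph with normals in that cone is $\Psi$-stationary, since its energy is the flux of the constant field $w$. A graph over $P_0=\nu_0^{\perp}$ with a shallow crease then satisfies every hypothesis of the corollary with $\Lambda=0$, yet it is not $C^1$.

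So ellipticity, and the regularity of $\Psi$ that Allard needs, cannot come from the choice of $\ve_0$; they must be standing hypotheses on $\Psi$. The paper's one-line argument tacitly assumes this by working ``in the anisotropic setting of Allard.'' You should state it as an assumption rather than claim to derive it.
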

De Rosa and Tione~\cite{derosa-tione-regularity} established this regularity theorem in every dimension and codimension for varifolds associated to Lipschitz graphs.
When $m=2$, De Philippis and Pigati~\cite{anisotropic-michael-simon} obtained this result for anisotropies close to the area using their Michael-Simon inequality.
Finally, Kolasi\'nski-Santili~\cite{MarioSlawomir} proved a $C^{1,\alpha}$ regularity result for hypersurfaces provided the anisotropic integrand is $C^3$ near a.e.~density $1$ points. 

The consequences of Theorem~\ref{thm:anisotropic-MS} in arbitrary codimension transfer to every anisotropy satisfying the conditions of Theorem~\ref{thm:most-general-michael-simon}; for simplicity, we state them for anisotropies close to the area functional.
A first well-known consequence is the functional Michael-Simon inequality.

\begin{corollary}\label{cor:functional-MS}
Under the assumptions of Theorem~\ref{thm:anisotropic-MS}, suppose in addition that $\Theta^m(\|V\|,x)\geq\theta_0>0$ for $\|V\|$-a.e.~$x$.
Then, for every compactly supported Lipschitz function $f$ that is defined in an open neighborhood of $\textup{spt} \, \|V\|$, it holds that
\[
\Bigl( \int |f|^{\frac{m}{m-1}} \, d \|V\| \Bigr)^{\frac{m-1}{m}} \leq C_N \, \theta_0^{- \frac{1}{m}} \Bigl[ \int |\nabla^V f| \, d \|V\| + \int |f| \, d|\delta_{\Psi} V| \Bigr].
\]
\end{corollary}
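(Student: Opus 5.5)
We derive the functional inequality from the second statement of Theorem~\ref{thm:anisotropic-MS}, applied to the weighted varifolds obtained by restricting $V$ to superlevel sets of $|f|$, combined with the coarea formula.
We may assume $f \geq 0$: replacing $f$ by $|f|$ does not change the left side, keeps $f$ Lipschitz, and $|\nabla^V |f|| = |\nabla^V f|$ a.e.
For each $t>0$ set $E_t = \{ f > t\}$ and $V_t = V \mres E_t$, the rectifiable varifold $v(M\cap E_t, \theta)$.
Each $V_t$ has finite mass, since $f$ has compact support, and its density is still at least $\theta_0$ $\|V_t\|$-a.e., because $E_t$ is open.

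The key step is the first variation of the restriction.
For a test field $X$ and a cutoff $\eta_\ve = \min\{1, (f-t)_+/\ve\}$, we have
\[
\delta_\Psi V(\eta_\ve X) = \int \eta_\ve \, B_\Psi(T_xM) : DX \, d\|V\| + \int \la B_\Psi(T_xM) \nabla^V \eta_\ve , X \rg \, d\|V\| .
\]
Here $B_\Psi$ is the stress tensor, which is bounded by $C = C(\Psi)$, and for $\Psi$ as in Theorem~\ref{thm:anisotropic-MS} it is close to $T$.
Letting $\ve\to0$ and using coarea on $M$ gives, for a.e.~$t$,
\[
|\delta_\Psi V_t|(\bR^N) \leq |\delta_\Psi V|(E_t) + C\,\mu_t(\bR^N), \qquad \mu_t = \theta\,\cH^{m-1}\mres (M\cap f^{-1}(t)),
\]
where we use that $\frac1\ve\int_{\{t<f<t+\ve\}}|\nabla^V f|\,d\|V\|\to \int_{f^{-1}(t)}\theta\,d\cH^{m-1}$ for a.e.~$t$.
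The main technical point is this slicing step: it requires the coarea formula for the Lipschitz function $f$ on the rectifiable set $M$, with weight $\theta$, together with a Lebesgue-point argument in $t$.
It also requires the finiteness of $|\delta_\Psi V_t|$, which follows from the estimate itself.

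With this bound in hand, Theorem~\ref{thm:anisotropic-MS} applied to $V_t$ gives, for a.e.~$t>0$,
\[
\|V\|(E_t)^{\frac{m-1}{m}} \leq C_N\theta_0^{-1/m}\Bigl[|\delta_\Psi V|(E_t) + C\,\mu_t(\bR^N)\Bigr].
\]
We integrate in $t$ over $(0,\infty)$.
On the right, $\int_0^\infty |\delta_\Psi V|(E_t)\,dt = \int f\, d|\delta_\Psi V|$, and by coarea $\int_0^\infty \mu_t(\bR^N)\,dt = \int |\nabla^V f|\,d\|V\|$.

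On the left we use the standard layer-cake argument.
Since $t\mapsto \|V\|(E_t)^{(m-1)/m}$ is nonincreasing, Minkowski's inequality for the $L^{m/(m-1)}$ norm (equivalently, the usual argument of Federer–Fleming and Maz'ya) gives
\[
\Bigl(\int f^{\frac{m}{m-1}}\,d\|V\|\Bigr)^{\frac{m-1}{m}} = \Bigl\| \int_0^\infty \mathbf 1_{E_t}\,dt\Bigr\|_{L^{m/(m-1)}(\|V\|)} \leq \int_0^\infty \|V\|(E_t)^{\frac{m-1}{m}}\,dt .
\]

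Combining the three displays yields the corollary, with $C_N$ enlarged by the uniform bound on $|B_\Psi|$.
That bound is at most $|T|+\ve_{N,m}$, which depends only on $N$ and $m$.
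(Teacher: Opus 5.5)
Your argument is correct. It is the standard derivation of the functional form from the second inequality of Theorem~\ref{thm:anisotropic-MS}: you restrict $V$ to the superlevel sets $\{f>t\}$, bound the extra boundary term in $\delta_{\Psi}(V\mres\{f>t\})$ by $\sup\|B_{\Psi}\|$ times the coarea slice, and integrate in $t$ using Minkowski's integral inequality. The paper does not write out a proof, calling the corollary a well-known consequence, and your argument is presumably the one it has in mind, relying beyond the theorem only on the Lipschitz first variation formula~\eqref{eqn:first-variation} read with tangential derivatives via $B_{\Psi}(T)=B_{\Psi}(T)T$.
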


For anisotropic integrands satisfying the atomic condition, De Philippis and Pigati~\cite{anisotropic-michael-simon}*{\S 2} derived several equivalent formulations of the Michael-Simon inequality; in particular, they proved that it implies density bounds and compactness of rectifiable varifolds.
Combining their results with Theorem~\ref{thm:anisotropic-MS}
gives the following consequences in every dimension and codimension.
\begin{corollary}\label{cor:density}
Let $\Psi \in C^1(\bG(N,m) , (0,\infty) )$ be a convex anisotropy with $\sup_{T \in \bG(N,m)} \|B_{\Psi}(T) - T \| < \ve_{N,m}$.
    Then, the following properties are satisfied.
\begin{enumerate}[(i)]
    \item There exists a constant $c(N,m,\Psi)>0$ such that every rectifiable $m$-varifold $V$ in $\bR^N$ with $\Theta^m( \|V\|,x) \geq \theta_0$ with anisotropic first variation locally in $L^p$ for $p>m$ satisfies $\Theta^m_* (\|V\|,x) \geq c \theta_0$ for every $x \in \textup{spt} \, \|V\|$.
    \item Let $V_i \rightharpoonup V$ be a converging sequence of rectifiable $m$-varifolds satisfying
    \[
    \Theta^m ( \|V_i \|, x) \geq \theta_0 > 0 \qquad \text{for } \; \|V_i\|\textup{-a.e.~} \; x
    \]
    and having locally uniformly bounded anisotropic mean curvature in $L^p$ for $p>m$.
    Then, $\textup{spt} \, \|V_i \| \to \textup{spt} \, \|V\|$ locally in the Hausdorff distance.
    In particular, the supports converge in the Hausdorff distance on $\bT^N$, or in $\bR^N$ if they are contained in a common compact set.
\end{enumerate}
Assume in addition that $\Psi$ satisfies the atomic condition.
Then the following properties hold.
\begin{enumerate}[(i)]
    \item[(iii)] Let $V_i$ be rectifiable $m$-varifolds in $\bR^N$, or equivalently in the flat torus $\bT^N = \mathbb{R}^N / \mathbb{Z}^N$, with locally uniformly bounded anisotropic first variation and $\Theta^m ( \|V_i \|,x) \geq \theta_0>0$ for $\|V_i\|$-a.e.~$x$.
    If $V_i \rightharpoonup V$ as varifolds, then $V$ is rectifiable and $\Theta^m ( \|V\|,x) \geq \theta_0$ for $\|V\|$-a.e.~$x$.
    Equivalently, there is no sequence of rectifiable $m$-varifolds in $\bR^N$ with uniformly bounded anisotropic first variation that exhibit diffuse concentration.
    \item[(iv)] Allard's strong constancy lemma holds for anisotropic varifolds, cf.~\cite{allard-regularity}*{p.3} and~\cites{derosa-ghiraldin , dephilippis-rindler-a-free }.
    Namely, let $V_i \rightharpoonup V$ be a converging sequence of rectifiable $m$-varifolds in $\bR^N$ or $\bT^N$.
    For every $i$, let $\cD_i \subset (0,\infty)$ satisfy $\inf \cD_i > 0$ and consider Borel sets $E_i \subseteq \bR^N$ such that $\Theta^m ( \|V_i \|, x) \in \cD_i$ for $\|V_i\|$-a.e.~$x \not\in E_i$ and
    \[
    \|V_i \|(E_i \cap K) \to 0 \qquad \text{and} \qquad \textstyle \sup_i |\delta_{\Psi} V_i|(K) < \infty
    \]
    for every compact set $K$.
    For $\cD \subset (0,\infty)$, write $S(\cD)$ for the finite-sum set of $\cD \subseteq \bR$, i.e., 
    \[
    S(\cD) := \{ d_1 + \cdots + d_q : q \geq 1, \; d_1, \dots, d_q \in \cD \}.
    \]
    Then, after passing to a subsequence, for $\|V\|$-a.e.~$x$ there are values $v_i(x) \in S(\cD_i)$ such that $\Theta^m (\|V\|,x) = \lim_{i \to \infty} v_i(x)$.
    \item[(v)] The class of integer rectifiable $m$-varifolds in $\bR^N$ is closed under varifold convergence with locally uniformly bounded anisotropic first variation.
\end{enumerate}
All the above conclusions are local.
In particular, they hold for any non-autonomous anisotropy $\Psi(x,T)$ on $\bR^N$ and any anisotropic integrand on the Grassmannian bundle of a smooth $N$-manifold whenever the corresponding Theorem~\ref{thm:anisotropic-MS} is available on compact subsets.
\end{corollary}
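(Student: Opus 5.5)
The plan is to derive each item from Theorem~\ref{thm:anisotropic-MS} together with the machinery of De Philippis--Pigati~\cite{anisotropic-michael-simon}*{\S 2}. Their work shows that, for a convex $C^1$ anisotropy, a global Michael--Simon inequality of the form in Theorem~\ref{thm:anisotropic-MS} implies density lower bounds and Hausdorff convergence of supports. Under the atomic condition, it also implies rectifiability of limits, the constancy lemma, and integrality. Since the hypothesis $\sup_T\|B_\Psi(T)-T\|<\ve_{N,m}$ is exactly that of Theorem~\ref{thm:anisotropic-MS}, the main task is to check that their abstract hypotheses are met. These are: a Michael--Simon inequality holding uniformly for all rectifiable varifolds, and stability under localization.

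For (i), I first pass to the functional form of Corollary~\ref{cor:functional-MS}. I localize $V$ by multiplying by a cutoff $f$ supported in $B_r(x)$; the first variation of the localized varifold is controlled by $|\delta_\Psi V|\mres B_r$ plus $\|\nabla f\|_\infty\,\|V\|(B_r)$, using that $|D\Psi|$ is bounded on the compact Grassmannian. Applying the inequality to $f=(r-|y-x|)_+$ gives a differential inequality for $\mu(r)=\|V\|(B_r(x))$:
\[
\mu(r)^{\frac{m-1}{m}}\le C\theta_0^{-1/m}\bigl(\mu'(r)+\|H_\Psi\|_{L^p}\,\mu(r)^{1-1/p}\bigr).
\]
Because $p>m$, the error term is lower order for small $r$. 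Integrating then yields $\mu(r)\ge c\theta_0 r^m$, which is the lower density bound. Item (ii) follows directly: a uniform lower bound on mass in small balls centred on the supports forbids points of $\operatorname{spt}\|V_i\|$ from staying far from $\operatorname{spt}\|V\|$. The converse inclusion is automatic from weak convergence.

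For (iii)--(v), I invoke the De Philippis--Pigati equivalence: under the atomic condition, the Michael--Simon inequality rules out diffuse concentration. Combined with the anisotropic rectifiability theorem of De Philippis--De Rosa--Ghiraldin, this gives rectifiability of the limit and preserves the density lower bound, which is (iii). Item (iv) then follows the strong constancy argument of~\cites{derosa-ghiraldin , dephilippis-rindler-a-free }, where the only missing input was exactly this non-concentration. Item (v) is the special case $\cD_i=\bZ_{>0}$, $E_i=\emptyset$, since then $S(\cD)=\bZ_{>0}$. Finally, the local nature of all arguments, via cutoffs and blow-ups at a point, gives the statement for non-autonomous anisotropies and anisotropies on manifolds by freezing $x$. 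In a blow-up the integrand converges to the frozen $\Psi(x_0,\cdot)$, so only Theorem~\ref{thm:anisotropic-MS} on compact sets is needed.

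The main obstacle I expect is verifying that the inequality of Theorem~\ref{thm:anisotropic-MS} is stable under these manipulations. The De Philippis--Pigati framework requires it for all localized and blown-up varifolds, whose anisotropy may be a small perturbation or the frozen integrand. The key point is that the hypothesis $\|B_\Psi-T\|<\ve_{N,m}$ is open and uniform in $x$. If it fails to hold uniformly, I would instead shrink the neighbourhood on compact subsets to regain uniformity.
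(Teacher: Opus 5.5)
Your proposal is correct and follows the paper's route. The paper gives no separate argument: it obtains every item by combining Theorem~\ref{thm:anisotropic-MS} (and its functional form, Corollary~\ref{cor:functional-MS}) with the equivalences in De Philippis--Pigati, \S 2, where the anisotropic rectifiability and strong constancy inputs enter under the atomic condition. Your ODE derivation for (i)--(ii) just spells out the standard step behind that citation. It works most cleanly with $f$ a piecewise-linear approximation of $\mathbf{1}_{B_r(x)}$, together with a short continuity argument so that the radius is uniform along the sequence in (ii).
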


\subsection{Strategy of the proof}

Let us explain the main idea of the proof.
In Section~\ref{section:smallest-eigenvalue}, we introduce a projection method for anisotropic stress measures.
Importantly, the projected measure satisfies support, mass, and divergence bounds in terms of the corresponding quantities for the varifold $V$, proved in Lemma~\ref{lemma:A-P-projection-properties}.
This technique is motivated by Brendle's ingenious formulation of an optimal transportation scheme for the sharp isoperimetric inequality in higher codimension, developed further in work with Eichmair~\cites{brendle-sharp-isoperimetric , brendle-eichmair }. 
In Brendle's argument, optimal transportation is combined with the normal bundle to lift the problem onto the ambient space, equipped with a rotationally symmetric density.
In our setting, the analogous role is played by the normalized Haar measure on the Grassmannian $\bG(N,m)$.

Using this projection method, we reduce the anisotropic Michael-Simon inequality for $m$-varifolds in arbitrary codimension to a quantitative non-concentration problem for matrix-valued measures with controlled divergence on $\bR^m$.
The construction of projected stress measures extends, to every dimension and codimension, the approach of De Philippis and Pigati~\cite{anisotropic-michael-simon}, which employed the Cahn-Hoffman vector of $2$-varifolds in $\mathbb{R}^3$.
In a precursor paper~\cite{anisotropic-surfaces}, we carry out this approach to prove a Michael-Simon inequality for $2$-varifolds in every codimension, using the multilinear Kakeya inequality of~\cite{anisotropic-michael-simon}.

Very recently, the remarkable work of Gennaioli and Rindler~\cite{vanishing-mass-conjecture} introduced several deep ideas to prove Bouchitt\'e's vanishing mass conjecture~\cite{shape-optimization}.
The vanishing mass theorem is intimately connected to the Michael-Simon inequality and the compactness of rectifiable varifolds and vector-valued measures.
Among many other consequences, this result, for the operator $\cA = \textup{curl}$, implies Alberti's rank one theorem~\cites{albert-rank-one , alberti-csornyei-preiss , delellis-rank-one }.
We note that the multilinear Kakeya inequality of De Philippis and Pigati also implies Alberti's rank one theorem.
In Section~\ref{section:measure-theory}, we derive certain important consequences of the results in~\cite{vanishing-mass-conjecture}, notably a quantitative non-concentration result for vector-valued measures; see Corollary~\ref{cor:concave-barrier}.
The theorem follows from combining this property with the projection estimate of Lemma~\ref{lemma:A-P-projection-properties} and an averaging inequality established in Lemma~\ref{lem:averagedCoercivityGeneral}.

\smallskip \noindent \textbf{Acknowledgments.}
We are very grateful to Guido de Philippis and Alessandro Pigati for valuable comments and insights on a previous version of this manuscript, providing an elegant and simpler argument which strengthens Theorems~\ref{thm:even-convex} and~\ref{thm:most-important-application} and their consequences to a $C^1$ neighborhood of every convex even hypersurface anisotropy.

We are very thankful to Antonio de Rosa for many helpful discussions on anisotropic problems and for insightful comments on a preliminary version of this manuscript.
We also thank Simon Brendle and Toby Colding for inspiring conversations.
BF recognizes support from a Simons Dissertation Fellowship, a MathWorks Fellowship, and the Citadel GQS PhD Fellowship.
RT is supported by a Simons Dissertation Fellowship, an A.G.~Leventis Foundation Scholarship, and an Onassis Foundation Scholarship.

\section{Preliminaries}

Let $\Psi: \bG(N,m) \to (0,+\infty)$ be $C^1$.
We denote by $A^t$ the transpose of an endomorphism $A$ and identify $\bG(N,m)$ with the set of rank-$m$ orthogonal projections in $\bR^N$, namely 
\[
\bG(N,m) = \{ T \in \bR^{N \times N} : T = T^t, \; T^2 = T, \; \on{tr} T = m \}.
\]
Given an $m$-plane $T \in \bG(N,m)$, we define a tensor field $B_{\Psi}(T)$ by
\begin{equation}\label{eqn:BPsi(T)}
B_{\Psi}(T) = \Psi(T) T + T^{\perp} \, d \Psi(T) \, T, \qquad \text{where } \; d \Psi(T) := D \Psi(T) + D \Psi(T)^t.
\end{equation}
By this definition, $B_{\Psi}(T)$ satisfies $B_{\Psi}(T) T = B_{\Psi}(T)$ and
\begin{equation}\label{eqn:operator-norm}
    \sup_{T \in \bG(N,m)} \| B_{\Psi}(T) - T \| \leq \| \Psi - 1 \|_{C^0} + 2 \, \|D \Psi\|_{C^0} \leq 2 \, \| \Psi - 1 \|_{C^1}
\end{equation}
in operator norm.
Indeed, $\| T^{\perp} \, d \Psi(T) \, T \| \leq 2 \, \|D\Psi(T)\|$ because $T,T^{\perp}$ are orthogonal projections.
In particular, $\sup_{T \in \bG(N,m)} \|B_{\Psi}(T) \| \leq C_N$ whenever $\sup_{T \in \bG(N,m)} \|B_{\Psi}(T) - T \| \leq c_N$.

Let $V = v(M,\theta)$ be a rectifiable $m$-varifold in an open set $\Omega \subset \bR^N$, namely a Radon measure on $\Omega \times \bG(N,m)$.
We write $\|V\|$ for the weight measure, which satisfies $\|V\|(A) := V(A \times \bG(N,m))$ for every Borel set $A \subseteq \Omega$.
If $\Theta^m(\|V\|,x_0) \geq \theta_0$ a.e., then we can choose the rectifiable set $M$ to satisfy $\theta \geq \theta_0$ $\cH^m$-a.e., and hence $\cH^m(M) \leq \theta_0^{-1} \|V\|(\Omega)$.

The \textbf{anisotropic first variation} of $V$ with respect to the flow generated by a Lipschitz vector field $g \in \textup{Lip}_c(\Omega;\bR^N)$ is computed in~\cite{derosa-ghiraldin}*{Lemma A.2} as
\begin{equation}\label{eqn:first-variation}
[\delta_{\Psi} V] (g) = \int \la B_{\Psi}(T), D g(x) \rg \, dV(x,T) =
\int_{M} \la B_{\Psi} (T_x M), Dg(x) \rg \, \theta \, d \cH^m(x).
\end{equation}
Finally, we say that the anisotropy $\Psi$ satisfies the \textbf{atomic condition (AC)} if, for every probability measure $\lambda$ on $\bG(N,m) $ and $A_{\Psi}(\lambda) := \int_{\bG(N,m)} B_{\Psi}(T) \,  \lambda(T)$, we have $\textup{rank} \, A_{\Psi}(\lambda) \geq m$ with equality if and only if $\lambda = \delta_{T_0}$ for some $T_0 \in \bG(N,m)$.
We refer the reader to the foundational works~\cites{derosa-ghiraldin , derosa-kolasinski , derosa-tione-regularity } for key consequences and equivalent conditions of (AC).

For varifolds in codimension one, every tangent hyperplane $T \in \bG(N,N-1)$ can identified with its unit normal $\pm \nu \in \bS^{N-1}$, so that $T = \nu^{\perp}$.
Accordingly, an anisotropic integrand $\Psi$ on $\bG(N,N-1)$ can be equivalently identified via $F(\nu) := \Psi(\nu^{\perp})$ with an even, positively one-homogeneous function $F: \bR^N \to [0,\infty)$ with stress tensor $B_F(\nu) = F(\nu) I - \nu \otimes dF(\nu)$.
Since $F$ is even, $B_F(-\nu) = B_F(\nu)$ is well-defined on the unoriented hyperplane $T = \nu^{\perp}$.
This formulation will be relevant for Theorem~\ref{thm:most-important-application}.

When $F : \bR^N \to [0,\infty)$ is convex, we can define the dual norm $F^*$ by 
\[ 
F^*(\ell) := \sup_{F(y) = 1} |\ell(y)| . 
\] 
For every $\nu \neq 0$, convexity and one-homogeneity give $dF(\nu)[y] \leq F(y)$ for every $y \in \bR^N$, and hence $F^*(dF(\nu)) \leq 1$.
On the other hand, Euler's identity gives $dF(\nu)[\nu] = F(\nu)$.
Since $F( \frac{\nu}{F(\nu)}) = 1$, evaluating the dual norm at $\frac{\nu}{F(\nu)}$ gives the reverse inequality.
Therefore,
\begin{equation}\label{eqn:F-star-dF-nu}
    F^* (dF(\nu)) = 1.
\end{equation}
Equivalently, $dF(\nu)$ is the supporting functional to the unit ball $\{ x : F(x) = 1 \}$ at the point $\frac{x}{F(x)}$. 

\subsection{Concave barriers and projected stresses}\label{section:smallest-eigenvalue}
Let $P$ be an $m$-dimensional Euclidean space.
A continuous function $\phi : \textup{End}(A) \to \bR$ is called a \textbf{concave barrier} if it is concave, positively one-homogeneous, and satisfies
\begin{equation}\label{eqn:concave-barrier}
    \phi(tA) = t \, \phi(A) \qquad \text{for all } \; t \geq 0, \qquad \phi(A) \leq 0 \qquad \text{whenever } \; \det A=0.
\end{equation}
Given a concave barrier $\phi$, the function $q := - \phi$ is positively one-homogeneous and convex, hence subadditive; moreover, bounding $|q|$ on the unit sphere of $\textup{End}(P)$ shows that
\begin{equation}\label{eqn:linear-bound}
    |\phi(A) - \phi(B)| \leq C_{\phi} |A-B|, \qquad \text{for a constant } \; C_{\phi}.
\end{equation}
The second property is significant in relation to the vanishing mass theorem~\cite{vanishing-mass-conjecture}.
Given a first-order differential operator $\mathbb{A}$, the \textbf{wave cone} was introduced in Tartar's theory of compensated compactness as the collection of all vectors $v \in V$ such that $\mathbb{A} v$ is not elliptic; see~\cite{rindler-book}*{\S 8.2}.
For the row-wise divergence of $m \times m$ matrices, the symbol is $\mathbb{A}(\xi) = A \xi$, and hence the wave cone is
\begin{equation}\label{eqn:lambda-div}
    \Lambda_{\textup{Div}} = \{ A \in \bR^{m \times m} : \det A = 0 \}.
\end{equation}
Consequently, the property~\ref{eqn:concave-barrier} implies that the function $q := - \phi$ is convex, positively one-homogeneous, and non-negative on $\Lambda_{\textup{Div}}$.
It is therefore a barrier function in the sense of~\cite{vanishing-mass-conjecture}*{\S 2}.

Consider now a finite $\textup{End}(P)$-valued measure $\mathbf{A} \in \cM( \bR^m; \textup{End}(P))$ and $\sigma$ any positive measure dominating the entries of $\mathbf{A}$.
For $a := \frac{d \mathbf{A}}{d \sigma}$ the Radon-Nikodym derivative, we define the functional 
\begin{equation}\label{eqn:Lambda-m-definition}
\Phi(\mathbf{A}) := \int \phi(a)\,d\sigma , \qquad a := \frac{d \mathbf{A}}{d \sigma}
\end{equation}
which is independent of the choice of $\sigma$, due to the $1$-homogeneity of $\phi$.

In particular, the smallest eigenvalue function $\lambda_m$ defines a concave barrier.
\begin{lemma}\label{lemma:lambda-m-properties}
For $A \in \textup{End}(P)$, we will consider the smallest eigenvalue function
\[
    \lambda_m(A) := \lambda_{\min} ( \textup{sym} \, A), \qquad \text{where } \; \textup{sym} \, A := \tfrac{1}{2} (A + A^t).
\]
Equivalently, $\lambda_m(A) = \min_{ |v| = 1 } \la A v , v \rg$.
The function $\lambda_m$ is superadditive on matrices and defines a concave barrier with $|\lambda_m(A) - \lambda_m(B)| \leq \|A - B\|_{\textup{op}}$ and $\lambda_m (R^t AR) = \lambda_m(A)$ for every $R \in O(P)$.
\end{lemma}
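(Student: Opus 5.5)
The plan is to verify each listed property directly from the variational characterization $\lambda_m(A) = \min_{|v|=1} \la A v, v \rg$.

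First, the equivalence of the two definitions. For every $v$ we have $\la A v, v \rg = \la A^t v, v \rg$, and hence $\la Av,v\rg = \la (\textup{sym}\,A) v, v \rg$. The Rayleigh quotient characterization of the smallest eigenvalue of the symmetric matrix $\textup{sym}\,A$ then gives the formula.

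Next, the algebraic properties, all of which follow from the representation of $\lambda_m$ as a minimum over $|v|=1$ of the linear functionals $A \mapsto \la Av,v\rg$.
\begin{itemize}
\item \emph{Concavity and continuity.} A pointwise minimum of linear functionals is concave, and it is continuous on the finite-dimensional space $\textup{End}(P)$.
\item \emph{Superadditivity.} Since $\la (A+B)v,v\rg = \la Av,v\rg + \la Bv,v\rg \geq \lambda_m(A)+\lambda_m(B)$, taking the minimum over $v$ gives $\lambda_m(A+B) \geq \lambda_m(A) + \lambda_m(B)$.
\item \emph{Positive homogeneity.} For $t \geq 0$, scaling commutes with the minimum, so $\lambda_m(tA) = t\lambda_m(A)$.
\item \emph{Lipschitz bound.} Since $|\la (A-B)v,v\rg| \leq \|A-B\|_{\textup{op}}$ for unit $v$, we get $\la Av,v\rg \leq \la Bv,v\rg + \|A-B\|_{\textup{op}}$. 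Minimizing and then symmetrizing in $A,B$ gives the bound.
\item \emph{Invariance.} For $R \in O(P)$ we have $\la R^tARv,v\rg = \la A(Rv),Rv\rg$, and $v \mapsto Rv$ is a bijection of the unit sphere, so $\lambda_m(R^tAR) = \lambda_m(A)$.
\end{itemize}

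The only step with content is the barrier condition: we must show $\lambda_m(A) \leq 0$ whenever $\det A = 0$. If $\det A = 0$, choose a unit vector $v \in \ker A$. Then $\la Av, v\rg = 0$, so $\lambda_m(A) \leq 0$.

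Note that the condition concerns $\det A$, not $\det(\textup{sym}\,A)$. Using a kernel vector of $A$ itself avoids this subtlety, because the quadratic form of $A$ coincides with that of $\textup{sym}\,A$. I expect no genuine obstacle; the only point needing care is exactly this distinction between $A$ and its symmetric part, which the quadratic-form characterization resolves.
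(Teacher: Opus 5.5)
Your proposal is correct and follows essentially the same route as the paper. The paper likewise treats every property except the barrier condition as immediate from the quadratic-form characterization. For that condition it argues by contrapositive: if $\lambda_m(A)>0$ then $\langle Av,v\rangle>0$ for all $v\neq 0$, so $A$ is injective and $\det A\neq 0$. This is the same observation as your direct choice of a unit vector in $\ker A$.
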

\begin{proof}
    We only prove the second part of~\eqref{eqn:concave-barrier}, as all other properties are immediate.
    For a matrix $A$, if $\lambda_{\min} ( \textup{sym} \, A) > 0$, then $\la Av, v \rg = \la \textup{sym} \, A \, v, v \rg > 0$ for every $v \neq 0$.
    Thus, $A$ is injective, and hence invertible, so $\det A \neq 0$.
    Consequently, $\det A = 0$ implies $\lambda_m(A) \leq 0$.
\end{proof}
We fix a collection of isometries $E_P : \bR^m \to P$ for each $m$-plane $P \in \bG(N,m)$ such that the association $P \mapsto E_P$ is Borel and write $\pi_P := E_P^t : \bR^N \to \bR^m$.
We introduce a projected stress measure $\mathbf{A}_P \in \cM( \bR^m ; \bR^{m \times m})$ by
\begin{equation}\label{eqn:projected-stress-measure}
\begin{split}
    \mathbf{A}_P &:= (\pi_P)_{\#} \bigl[ E_P^t B_{\Psi}(T) E_P \, d V(x,T) \bigr], \qquad \textup{so that} \\
    \int_{\bR^m} \la \varphi(y), d \mathbf{A}_P(y) \rg &\;= \int_{\bR^N \times \bG(N,m)} \la \varphi(\pi_P x), E^t_P B_{\Psi}(T) E_P \rg \, d V(x,T)
\end{split}
\end{equation}
for $\varphi \in C_c(\bR^m ; \bR^{m \times m})$.
Replacing $E_P$ by $E_P R$ for some $R \in O(m)$ conjugates the matrix measure by an orthogonal map, so the corresponding measure is obtained from $\mathbf{A}_P$ by simultaneously rotating the domain and conjugating its values.
In particular, all quantities below involving total variation and divergence are independent of the choice of $E_P$.

For a rectifiable varifold $V$ satisfying $\Theta^m ( \|V\|, x) \geq \theta_0$ for $\|V\|$-a.e.~$x$, we can write $V = v(M, \theta)$ where $\theta \geq \theta_0$ $\cH^m$-a.e.~on the rectifiable set $M$.
Then, $\mathbf{A}_P$ is concentrated on $\pi_P(M)$, and
\[
\cL^m ( \pi_P(M)) \leq \int_M J_m (\pi_P|_{T_x M}) \, d \cH^m(x) \leq \cH^m(M)
\]
by the area formula.
Since $\pi_P$ is continuous, while $M$ is rectifiable and may be chosen Borel, the set $\pi_P(M)$ is Lebesgue measurable.
Thus, we can choose a Borel set $G_P \supset \pi_P(M)$ such that $\mathbf{A}_P$ is concentrated on $G_P$ and $\cL^m (G_P) = \cL^m(\pi_P(M)) \leq \cH^m(M)$.

For a matrix-valued measure $\mathbf{A}$ on $\bR^m$, we define its row-wise distributional divergence by
\begin{equation}\label{eqn:divergence-definition}
\la \textup{Div} \, \mathbf{A} , X \rg := - \int_{\bR^m} \la \mathbf{A}, DX \rg, \qquad \text{for } \; X \in C_c^1(\bR^m; \bR^m).
\end{equation}
When the anisotropic first variation $\delta_{\Psi} V$ defines a finite measure on $\bR^N$, our definitions imply that the projected stress measure~\eqref{eqn:projected-stress-measure} satisfies $\textup{Div} \, \mathbf{A}_P = - (\pi_P){\#} (E^t_P \nu)$.
\begin{lemma}\label{lemma:A-P-projection-properties}
Suppose that $\|V\|(\bR^N)<\infty$ and that $V$ has finite anisotropic first variation with respect to an anisotropy $\Psi$ satisfying $\sup_{T \in \bG(N,m)} \|B_{\Psi}(T) - T \| \leq C_N$.
Then, for every $P \in \bG(N,m)$, the measure $\textup{Div} \, \mathbf{A}_P$ has finite total variation and
    \[
    |\textup{Div} \, \mathbf{A}_P| (\bR^m) \leq |\delta_{\Psi} V|(\bR^N), \qquad |\mathbf{A}_P|(\bR^m) \leq C'_N \, \|V\|(\bR^N).
    \]
\end{lemma}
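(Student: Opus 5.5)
The plan is to test both bounds against the definitions directly. Everything reduces to unwinding the pushforward in \eqref{eqn:projected-stress-measure} and comparing with the first variation formula \eqref{eqn:first-variation}. The one structural fact we need is that composing with the projection $\pi_P$ pulls a test field on $\bR^m$ back to a test field on $\bR^N$ without increasing its sup norm.

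\emph{Mass bound.} Since $\|E_P\| = \|E_P^t\| = 1$, we have pointwise
\[
\|E_P^t B_\Psi(T) E_P\| \leq \|B_\Psi(T)\| \leq 1 + C_N.
\]
The hypothesis $\sup_T\|B_\Psi(T)-T\|\le C_N$ and $\|T\|=1$ give the last inequality. Now take any $\varphi\in C_c(\bR^m;\bR^{m\times m})$ with $|\varphi|\le 1$ in the Hilbert--Schmidt norm. The defining identity of $\mathbf{A}_P$ gives
\[
\Bigl|\int\la \varphi, d\mathbf{A}_P\rg\Bigr| \le \int |E_P^tB_\Psi(T)E_P|_{HS}\, dV \le \sqrt{m}\,(1+C_N)\,\|V\|(\bR^N).
\]
Taking the supremum over such $\varphi$ yields $|\mathbf{A}_P|(\bR^m)\le C_N'\|V\|(\bR^N)$ with $C_N'=\sqrt{m}(1+C_N)$. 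Using $m<N$, this constant depends only on $N$.

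\emph{Divergence bound.} Fix $X\in C^1_c(\bR^m;\bR^m)$ and set $g(x) := E_P X(\pi_P x)$. This $g$ is $C^1$ with bounded derivative
\[
Dg(x) = E_P\, DX(\pi_Px)\,E_P^t,
\]
but it is not compactly supported in $\bR^N$: it is constant along $P^\perp$. The main obstacle is justifying that the first variation identity still applies to such a $g$.

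To handle this, cut off. Let $\chi_R(x)=\chi(x/R)$ with $\chi\in C^\infty_c$, $0\le\chi\le1$, and $\chi=1$ on $B_1$, and set $g_R=\chi_R g$. Then $Dg_R=\chi_R Dg + g\otimes D\chi_R$, with $|g\otimes D\chi_R|\le C\|X\|_{C^0}/R$. Apply \eqref{eqn:first-variation} to $g_R$ and let $R\to\infty$. Since $\|V\|(\bR^N)<\infty$ and $B_\Psi$ is bounded, dominated convergence gives
\[
\int \la B_\Psi(T), Dg\rg\, dV = \lim_R [\delta_\Psi V](g_R).
\]
Moreover, $|[\delta_\Psi V](g_R)|\le \|g_R\|_{C^0}\,|\delta_\Psi V|(\bR^N)\le \|X\|_{C^0}\,|\delta_\Psi V|(\bR^N)$. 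The first inequality uses that the finite first variation is represented by a finite vector measure $\nu$ acting on $g_R$.

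Now compute the left side using $\la B, E_P D E_P^t\rg=\la E_P^tBE_P, D\rg$:
\[
\int\la B_\Psi(T),Dg\rg\,dV=\int\la E_P^tB_\Psi(T)E_P, DX(\pi_Px)\rg\,dV=\int\la \mathbf{A}_P, DX\rg = -\la \textup{Div}\,\mathbf{A}_P, X\rg.
\]
Hence $|\la\textup{Div}\,\mathbf{A}_P,X\rg|\le \|X\|_{C^0}|\delta_\Psi V|(\bR^N)$ for all $X\in C^1_c$. By density of $C^1_c$ in $C_0$ and the Riesz representation theorem, $\textup{Div}\,\mathbf{A}_P$ is a finite measure with total variation at most $|\delta_\Psi V|(\bR^N)$.

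The limit also identifies the measure explicitly. Since $[\delta_\Psi V](g_R)=\int\la g_R,d\nu\rg\to\int\la E_PX(\pi_P x),d\nu\rg$, we get $\textup{Div}\,\mathbf{A}_P=-(\pi_P)_\#(E_P^t\nu)$, as claimed before the lemma. This representation gives the total variation bound at once, because pushforward does not increase total variation and $|E_P^t\nu|\le|\nu|$.
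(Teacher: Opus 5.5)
Your proposal is correct and follows the paper's proof essentially step for step: you lift $X$ to $E_P X(\pi_P x)$, cut off at scale $R$, and pass to the limit using $\|V\|(\bR^N)<\infty$ and the boundedness of $B_\Psi$. You also bound the mass through the operator-norm contraction of $E_P^t(\cdot)E_P$, as the paper does. Your extra identification $\textup{Div}\,\mathbf{A}_P = -(\pi_P)_\#(E_P^t\nu)$ is a correct addition; the paper only asserts it just before the lemma, without proof.
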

\begin{proof}
Let us write $C'_N := \sup_{T \in \bG(N,m)} \|B_{\Psi}(T) \|_{\textup{op}} < \infty$.
For every $X \in C_c^1(\bR^m ; \bR^m)$, the lift $\tilde{X}(x) := E_P X( \pi_P x)$ is bounded and constant along the directions in $P^{\perp}$.
We choose cutoff functions $\eta_R \in C_c^1(\bR^N)$ with $0 \leq \eta_R \leq 1$ and $\eta_R \equiv 1$ on $B_R$, $\eta_R \equiv 0$ outside $B_{2R}$, and $|\nabla \eta_R| \leq CR^{-1}$.
Let
\[
X_R := \eta_R \tilde{X} \in C_c^1(\bR^N; \bR^N), \qquad D \tilde{X}(x) = E_P \, DX( \pi_P x) \, E^t_P.
\]
Thus, the first variation formula for the vector field $X_R$ gives
\begin{equation}\label{eqn:delta-Psi-V-XR}
    \begin{split}
        [\delta_{\Psi} V](X_R) &= \int \eta_R(x) \la B_{\Psi}(T), E_P \, DX (\pi_P x) \, E^t_P \rg \, dV(x,T) \\
        & \quad + \int \la B_{\Psi}(T), \tilde{X}(x) \otimes D \eta_R(x) \rg \, dV(x,T).
    \end{split}
\end{equation}
The second term in the above expression satisfies $|\int \la B_{\Psi}(T), \tilde{X} \otimes D \eta_R \rg \, dV| \leq CC'_N R^{-1} \|X\|_{L^{\infty}} \|V\|(\bR^N)$, and hence tends to zero as $R \to \infty$.
For the first term above, the definition~\eqref{eqn:projected-stress-measure} of $\mathbf{A}_P$ and an application of the dominated convergence theorem give
\[
\lim_{R \to \infty} \int \eta_R(x) \la B_{\Psi}(T), E_P \, DX(\pi_P x) \, E^t_P \rg \, dV(x,T) = \int_{\bR^m} \la DX(y), d \mathbf{A}_P(y) \rg.
\]
On the other hand, 
\[
|\delta_{\Psi} V (X_R)| \leq \| X_R \|_{L^{\infty}} |\delta_{\Psi} V|(\bR^N) \leq \|X \|_{L^{\infty}} |\delta_{\Psi} V| (\bR^N).
\]
Passing to the limit as $R \to \infty$ in~\eqref{eqn:delta-Psi-V-XR} therefore yields
\[
\Bigl| \int_{\bR^m} \la D X, d \mathbf{A}_P \rg \Bigr| \leq \|X \|_{L^{\infty}} |\delta_{\Psi} V|(\bR^N).
\]
This means that the distribution $X \mapsto \la \mr{Div} \mathbf{A}_P , X \rg := - \int_{\bR^m} \la D X, d \mathbf{A}_P \rg$ is of order zero, and hence represented by a finite $\bR^m$-valued Radon measure.
Taking the supremum over $X \in C_c^1(\bR^m; \bR^m)$ with $\|X\|_{L^{\infty}}$ proves the claimed bound $|\textup{Div} \, \mathbf{A}_P|(\bR^m) \leq |\delta_{\Psi} V|(\bR^N)$.

Finally, since projection by $E_P$ does not increase the operator norm, we have $\|E^t_P B_{\Psi}(T) E_P \| \leq C'_N$.
Consequently, by the definition of the pushforward measure,
\[
|\mathbf{A}_P| (\bR^m) \leq C_m \int_{\bR^N \times \bG(N,m)} \| E^t_P B_{\Psi}(T) E_P \| \, d V(x,T) \leq C_m C'_N \|V\|(\bR^N)
\]
for a dimensional constant $C_m$.
This completes the proof.
\end{proof}

Concave barriers have the crucial property that contributions from different
sheets of $V$ lying over the same point of the projection giving rise to $\mathbf{A}_P$ increase the corresponding spectral functional.
\begin{lemma}\label{lemma:concavity-AP}
Let $\phi$ be a concave barrier with associated function $\Phi$ on measures, defined in~\eqref{eqn:Lambda-m-definition}.
    For every $m$-plane $P \in \bG(N,m)$, it holds that
    \[
    \Phi ( \mathbf{A}_P) \geq \int_{\bR^N \times \bG(N,m)} \phi( E^t_P B_{\Psi}(T) E_P) \, d V(x,T).
    \]
\end{lemma}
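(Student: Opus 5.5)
This is Jensen's inequality for the concave, positively one-homogeneous function $\phi$, applied fiberwise along the projection $\pi_P$. The inequality goes in the right direction because $\phi$ is superadditive: from concavity and one-homogeneity, $\phi(A+B) = 2\phi(\tfrac{A+B}{2}) \geq \phi(A) + \phi(B)$. The plan is to disintegrate the varifold measure over the fibers of $\pi_P$ and pass this superadditivity to integrals.

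\textbf{Step 1: disintegration.} Let $\mu := (\pi_P)_{\#} V$ be the pushforward of $V$ (as a measure on $\bR^N \times \bG(N,m)$) under $(x,T) \mapsto \pi_P x$. This is a finite positive measure on $\bR^m$. Since $\|E_P^t B_{\Psi}(T) E_P\| \leq C'_N$, it dominates every entry of $\mathbf{A}_P$ up to a constant, so we may take $\sigma := \mu$ in~\eqref{eqn:Lambda-m-definition}. By the disintegration theorem, there are probability measures $V_y$ on $\pi_P^{-1}(y) \times \bG(N,m)$ with $V = \int V_y \, d\mu(y)$. By the defining identity~\eqref{eqn:projected-stress-measure}, the density of $\mathbf{A}_P$ with respect to $\mu$ is then
\[
a(y) = \int E_P^t B_{\Psi}(T) E_P \, dV_y(x,T) \qquad \text{for } \mu\text{-a.e. } y.
\]

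\textbf{Step 2: Jensen on each fiber.} Since $\phi$ is concave and continuous, and $T \mapsto E_P^t B_{\Psi}(T) E_P$ is bounded and Borel, Jensen's inequality for the probability measure $V_y$ gives
\[
\phi(a(y)) \geq \int \phi(E_P^t B_{\Psi}(T) E_P) \, dV_y.
\]

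\textbf{Step 3: integrate.} Integrating this inequality against $\mu$ and using $V = \int V_y \, d\mu$ gives exactly $\Phi(\mathbf{A}_P) \geq \int \phi(E_P^t B_{\Psi}(T) E_P) \, dV$. Both sides are finite by the Lipschitz bound~\eqref{eqn:linear-bound} and the finiteness of the mass.

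\textbf{Main obstacle.} The only delicate point is measure-theoretic: justifying the disintegration and Jensen's inequality for matrix-valued integrands. A concave function is an infimum of affine functions, so Jensen's inequality holds for each affine minorant and hence for $\phi$. If one prefers to avoid disintegration, the same result follows by approximating $\phi$ from above by infima of finitely many affine functions $\ell_j$. Then:
\begin{itemize}
\item on each Borel piece where $\ell_j$ attains the infimum at $a(y)$, linearity of $\ell_j$ commutes with the pushforward;
\item $\ell_j \geq \phi$ pointwise on the integrand;
\item monotone convergence then finishes the argument.
\end{itemize}
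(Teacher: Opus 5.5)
Your proposal is correct and follows the paper's proof essentially step for step. Like the paper, you disintegrate $V$ over $(x,T)\mapsto \pi_P x$ and use $\mu_P=(\pi_P)_{\#}\|V\|$ as the dominating measure, so that $d\mathbf{A}_P/d\mu_P$ is the fiber average of $E_P^tB_\Psi(T)E_P$; you then apply Jensen's inequality for the concave $\phi$ on each fiber and integrate back against $\mu_P$. One wording slip in your side remark: a concave $\phi$ is the infimum of its affine \emph{majorants}, not minorants, as your later condition ``$\ell_j \geq \phi$'' correctly reflects.
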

\begin{proof}
Consider the projection $\tilde{\pi}_P : \bR^N \times \bG(N,m) \to \bR^m$ given by $\tilde{\pi}_P(x,T) := \pi_P x$, and let $\mu_P := (\pi_P)_{\#} \|V\| = (\tilde{\pi}_P)_{\#} V$.
The maps $\tilde{\pi}_P$ and $\pi_P$ are continuous.
By the disintegration theorem for rectifiable varifolds~\cite{simon-gmt}*{Ch.~8}, we can find a $\mu_P$-measurable family of probability measures $\{ \sigma_{P,y}\}_{y \in \bR^m}$, with $\sigma_{P,y}$ concentrated on $\tilde{\pi}^{-1}_P(y)$ for $\mu_P$-a.e.~$y$, such that
\begin{equation}\label{eqn:disintegration-of-measures}
    \int f(x,T) \, dV(x,T) = \int_{\bR^m} \Bigl( \int f(x,T) \, d \sigma_{P,y}(x,T) \Bigr) \, d\mu_P(y)
\end{equation}
for every $V$-integrable Borel function $f$.
It follows from the definition~\eqref{eqn:projected-stress-measure} of $\mathbf{A}_P$ and Lemma~\ref{lemma:A-P-projection-properties} that $\mathbf{A}_P \ll \mu_P$, and $\mu_P$-a.e.~$y$ satisfies
\begin{equation}\label{eqn:density-AP-muP}
    \frac{d \mathbf{A}_P}{d \mu_P}(y) = \int_{\tilde{\pi}^{-1}_P(y)} E^t_P \, B_{\Psi}(T) \, E_P \, d \sigma_{P,y}(x,T).
\end{equation}
Indeed, this property is obtained by testing the right-hand side against a compactly supported matrix-valued function and using~\eqref{eqn:disintegration-of-measures}.
Since the definition of $\Phi$ is independent of the choice of dominating measure, and $\mathbf{A}_P \ll \mu_P$, we can use $\mu_P$ in the definition~\eqref{eqn:Lambda-m-definition}.
Because $\phi$ is concave and $\sigma_{P,y}$ is a probability measure, Jensen's inequality gives
\begin{align*}
    \Phi ( \mathbf{A}_P) &= \int_{\bR^m} \phi \Bigl( \int E^t_P B_{\Psi}(T) E_P \, d \sigma_{P,y}(x,T) \Bigr) \, d \mu_P(y) \\
    &\geq \int_{\bR^m} \int \phi(E^t_P B_{\Psi}(T) E_P) \, d \sigma_{P,y}(x,T) \, d\mu_P(y) \\
    &= \int_{\bR^N \times \bG(N,m)} \phi(E^t_P B_{\Psi}(T) E_P) \, d V(x,T).
\end{align*}
This completes the proof of our assertion.
\end{proof}

We will study the average of the functional $\Phi ( \mathbf{A}_P)$ over projections onto $m$-planes in $\bG(N,m)$.
For a concave barrier $\phi$, a Borel set $\Gamma\subset\bG(N,m)$, and a probability measure $\rho$ on $\bG(N,m)$, define
\begin{equation}\label{eqn:kappa-Psi-Gamma-phi-rho}
\kappa(\Psi, \Gamma, \phi, \rho) := \inf_{T\in\Gamma} \int_{\bG(N,m)} \phi (E_P^tB_\Psi(T)E_P) \, d \rho(P)
\end{equation}
using our fixed choice of isometries $E_P : \bR^m \to P$ so that the map $P \mapsto E_P$ is Borel.

\begin{lemma}\label{lem:averagedCoercivityGeneral}
Let $V=v(M,\theta)$ be a finite rectifiable $m$-varifold such that
$T_xM\in\Gamma$ for $\cH^m$-a.e.~$x\in M$.
Then, there exists
$P\in\bG(N,m)$ such that $\Phi(\mathbf A_P) \geq \kappa(\Psi, \Gamma, \phi, \rho) \|V\| (\bR^N)$.
\end{lemma}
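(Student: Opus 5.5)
The plan is an averaging argument: integrate the inequality of Lemma~\ref{lemma:concavity-AP} against the probability measure $\rho$ over $P\in\bG(N,m)$, use Fubini and the definition of $\kappa$, and then select a good $P$ by the pigeonhole principle for averages.

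First, for each $P$, Lemma~\ref{lemma:concavity-AP} gives
\[
\Phi(\mathbf{A}_P) \geq \int_{\bR^N \times \bG(N,m)} \phi(E_P^t B_\Psi(T) E_P)\, dV(x,T).
\]
I will integrate this in $d\rho(P)$. To make the integration legitimate, I first check measurability. The map $(x,T,P) \mapsto \phi(E_P^t B_\Psi(T) E_P)$ is Borel, since $P\mapsto E_P$ is Borel and $\phi$, $B_\Psi$ are continuous. It is bounded by $C_\phi \sup_T\|B_\Psi(T)\|$, using~\eqref{eqn:linear-bound} with $\phi(0)=0$. Because $V$ is finite, Fubini applies to the right-hand side. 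For the left-hand side, I expect that one only needs an upper integral; to avoid this issue I argue by contradiction, as below.

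Second, I swap the order of integration and use that $V = v(M,\theta)$ is carried by pairs $(x,T_xM)$ with $T_xM\in\Gamma$ for $\|V\|$-a.e.\ $x$. For each such $T$, the definition~\eqref{eqn:kappa-Psi-Gamma-phi-rho} gives $\int \phi(E_P^tB_\Psi(T)E_P)\,d\rho(P) \geq \kappa$. Therefore
\[
\int_{\bG(N,m)} \int \phi(E_P^t B_\Psi(T)E_P)\,dV\,d\rho(P) \geq \kappa\,\|V\|(\bR^N).
\]

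Third, I conclude. Suppose that $\Phi(\mathbf A_P) < \kappa\|V\|(\bR^N)$ held for every $P$. Then the measurable function $g(P) := \int\phi(E_P^tB_\Psi(T)E_P)\,dV$ would satisfy $g(P) < \kappa\|V\|(\bR^N)$ for all $P$. Integrating against the probability measure $\rho$ would give $\int g\,d\rho < \kappa\|V\|(\bR^N)$, because the integral of a function strictly below a constant on a probability space is strictly below that constant. This contradicts the second step. Hence some $P$ satisfies $g(P)\ge \kappa\|V\|$, and so $\Phi(\mathbf A_P)\ge g(P)\ge\kappa\|V\|(\bR^N)$.

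The main obstacle is the measurability bookkeeping: the joint Borel measurability needed for Fubini, and the fact that $\Gamma$ is only Borel, so membership $T\in\Gamma$ must hold $V$-a.e. The rest is immediate. The case $\kappa=-\infty$ is trivial, and the argument above also covers $\kappa\le 0$.
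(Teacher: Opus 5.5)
Your proposal is correct and follows essentially the same route as the paper. You both integrate the bounded Borel function $P \mapsto \int \phi(E_P^t B_\Psi(T) E_P)\,dV(x,T)$ against $\rho$, use Fubini and $T\in\Gamma$ for $V$-a.e.~$(x,T)$ to bound its average below by $\kappa\,\|V\|(\bR^N)$, select a $P$ where it is at least this average, and then apply Lemma~\ref{lemma:concavity-AP}. Your contradiction argument is just a rephrasing of that selection step, and the case $\kappa=-\infty$ cannot occur because the integrand is bounded.
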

\begin{proof}
Because the map $P \mapsto E_P$ is Borel, and both $B_{\Psi}, \phi$ are continuous, we can define the map
\[
F_V(P) := \int_{\bR^N\times\bG(N,m)} \phi (E_P^tB_\Psi(T)E_P) \,dV(x,T)
\]
for $P \in \bG(N,m)$, which is Borel measurable.
Because $\Psi$ is $C^1$ on $\bG(N,m)$ and $\phi$ satisfies the linear bound~\eqref{eqn:linear-bound}, the integrand is uniformly bounded.
Let us denote $\kappa = \kappa(\Psi, \Gamma, \phi, \rho)$.
Applying Fubini's theorem and using the fact that $T\in\Gamma$ for $V$-a.e.~$(x,T)$, we obtain 
\[
\int_{\bG(N,m)}F_V(P)\,d\rho(P) = \int_{\bR^N\times\bG(N,m)} \Bigl[ \int_{\bG(N,m)} \phi (E_P^tB_\Psi(T)E_P)\,d\rho(P) \Bigr] \, dV(x,T) \geq \kappa \|V\|(\bR^N).
\]
Therefore, there exists some $P\in\bG(N,m)$ such that $F_V(P) \geq  \kappa\, \|V\|(\bR^N)$.
Applying Lemma~\ref{lemma:concavity-AP}, we conclude that $\Phi(\mathbf A_P) \geq F_V(P) \geq \kappa \|V\|(\bR^N)$ as claimed.
\end{proof}

For Theorem~\ref{thm:anisotropic-MS}, we will specialize the above result to $\phi = \lambda_m$ and take $\rho$ to be the normalized Haar measure on $\bG(N,m)$.
We then define, for fixed $T \in \bG(N,m)$,
\begin{equation}\label{eqn:d-N-m-beta}
    a_{N,m} := \int_{\mathbb{G}(N,m)} \lambda_{m}(E^t_P T E_P) \, dP, \qquad T \in \mathbb{G}(N,m).
\end{equation}
This quantity is independent of the choice of $T$ because the action of $O(N)$ on $\mathbb{G}(N,m)$ is transitive, while the functional $\lambda_{m}$ is invariant under orthogonal conjugation.
\begin{lemma}\label{lemma:aN,m}
    For every $m<N$, we have $a_{N,m} \geq \binom{N}{m}^{-1}> 0$.
\end{lemma}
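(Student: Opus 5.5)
The plan is to bound the smallest eigenvalue from below by a determinant, and then compute the Haar average of that determinant exactly using Cauchy--Binet.

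\emph{Step 1: reduce to a determinant.} Fix $T \in \bG(N,m)$ and $P \in \bG(N,m)$. The matrix $S_P := E_P^t T E_P$ is symmetric, since $T=T^t$. Because $T$ is an orthogonal projection, $\la S_P v, v\rg = |T E_P v|^2 \in [0, |v|^2]$, so all eigenvalues $\mu_1 \leq \dots \leq \mu_m$ of $S_P$ lie in $[0,1]$. (They are the squared cosines of the principal angles between $P$ and $T$.) Hence
\[
\lambda_m(S_P) = \mu_1 \geq \mu_1 \mu_2 \cdots \mu_m = \det(E_P^t T E_P),
\]
and therefore $a_{N,m} \geq \int_{\bG(N,m)} \det(E_P^t T E_P)\, dP$.

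\emph{Step 2: rewrite the determinant.} Writing $T = E_T E_T^t$, we get $E_P^t T E_P = (E_T^t E_P)^t (E_T^t E_P)$. Hence
\[
\det(E_P^t T E_P) = \det(E_T^t E_P)^2 = \la \xi_P, \xi_T \rg^2,
\]
where $\xi_P := E_P e_1 \wedge \cdots \wedge E_P e_m$ is the unit simple $m$-vector of $P$. The pairing of simple $m$-vectors is the determinant of the Gram matrix. This quantity is independent of the choice of $E_P$ and $E_T$ up to sign, so $f_T(P) := \la \xi_P,\xi_T\rg^2$ is a well-defined Borel function on $\bG(N,m)$.

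\emph{Step 3: average via Cauchy--Binet.} Let $e_1,\dots,e_N$ be the standard basis. For each multi-index $I = (i_1 < \cdots < i_m)$, let $T_I$ be the coordinate plane spanned by $e_{i_1},\dots,e_{i_m}$, with $\xi_{T_I} = e_I$. The $e_I$ form an orthonormal basis of $\Lambda^m \bR^N$ and $|\xi_P| = 1$. By Cauchy--Binet (Parseval in $\Lambda^m\bR^N$),
\[
\sum_{I} \la \xi_P, e_I\rg^2 = 1 \qquad \text{for every } P .
\]
Integrating against the normalized Haar measure gives $\sum_I \int f_{T_I}\, dP = 1$. Moreover, $\int f_T \, dP$ does not depend on $T$: for $R \in O(N)$ we have $f_{RTR^t}(RPR^t) = f_T(P)$, the Haar measure is $O(N)$-invariant, and $O(N)$ acts transitively on $\bG(N,m)$. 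All $\binom{N}{m}$ terms are therefore equal, so each equals $\binom{N}{m}^{-1}$. Combining with Step 1 yields
\[
a_{N,m} \geq \binom{N}{m}^{-1} > 0 .
\]

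Every step here is elementary. The only point requiring care is the invariance in Step 3: one must check that $f_T(P)$ transforms correctly under simultaneous conjugation, which holds because $\xi_{RP} = \pm (\Lambda^m R)\xi_P$ and $\Lambda^m R$ is orthogonal. The positivity $m<N$ is only needed for $\bG(N,m)$ to be nontrivial; the bound itself holds verbatim.
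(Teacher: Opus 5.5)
Your proposal is correct and follows essentially the same route as the paper's proof. Both arguments bound $\lambda_m(E_P^tTE_P)$ below by $\det(E_P^tTE_P)$, using that the eigenvalues lie in $[0,1]$. Both then use Cauchy--Binet, $\sum_I \det(Q_I)^2 = 1$, together with $O(N)$-invariance of the Haar measure to show that each of the $\binom{N}{m}$ equal averages equals $\binom{N}{m}^{-1}$. The differences are cosmetic: you write $\det(Q_I)$ as the Plücker pairing $\langle \xi_P, e_I\rangle$, and you justify the equality of the averages by transitivity rather than by coordinate permutations.
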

\begin{proof}
    By rotational invariance, we can fix $T \in \bG(N,m)$ to be $\textup{span} \{ e_1, \dots, e_m \}$.
    For $P\in \bG(N,m)$, let $Q \in \bR^{N\times m}$ have orthogonal columns spanning $P$ and satisfy $Q^t Q =I_m$.
    Then, $E^t_P TE_P = Q^t TQ$ is a symmetric positive semidefinite matrix whose eigenvalues all lie in $[0,1]$.
    Denoting these by $0 \leq \lambda_m \leq \cdots \leq \lambda_1 \leq 1$, we obtain
    \[
    \det (Q^t TQ) = \prod_{i=1}^m \lambda_i \leq \lambda_m = \lambda_m (Q^t T Q), \qquad \implies \qquad a_{N,m} \geq \int_{\bG(N,m)} \det (Q^t TQ) \, dP.
    \]
    Let $Q_I$ denote the $m \times m$ sub-matrix of $Q$ obtained by retaining the rows indexed by an $m$-element subset $I \subset \{ 1, \dots, N \}$.
    The Cauchy-Binet identity together with $Q^t Q = I_m$ shows that 
    \[
    1 = \det (Q^t Q) = \sum_{ |I|=m } \det (Q_I)^2.
    \]
    On the other hand, for $I_0 = \{ 1, \dots, m \}$, the choice of $T$ gives $\det(Q^t TQ) = \det (Q^t_{I_0} Q_{I_0}) = \det (Q_{I_0})^2$.
    Integrating over $\bG(N,m)$, Haar invariance implies that $\int_{\bG(N,m)} \det (Q_I)^2 \, dP$ has the same value for every $m$-element subset $I \subset \{ 1, \dots, N \}$.
    Since there are $\binom{N}{m}$, we obtain
    \[
    1 = \binom{N}{m} \int_{\bG(N,m)} \det (Q_I)^2 \, dP, \qquad \implies \qquad \int_{\bG(N,m)} \det (Q^t TQ) \, dP = \binom{N}{m}^{-1}.
    \]
    The claimed bound $a_{N,m} \geq \binom{N}{m}^{-1}$ follows from combining the preceding inequalities.
\end{proof}

\section{Quantitative non-concentration and the vanishing mass theorem}\label{section:measure-theory}

The proof of Theorem~\ref{thm:anisotropic-MS} will invoke some key tools from measure theory.
We first record a standard strict approximation lemma for vector-valued measures.

\begin{lemma}\label{lemma:strict-approximation}
    For $n, d \geq 1$, consider a measure $\mu \in \cM( \bR^n; \bR^d)$ and let $\rho_{\ve}$ be a standard non-negative mollifier, with $\mu_{\ve} := \rho_{\ve} \ast \mu \in \cL^n$.
    Then, $\mu_{\ve} \to \mu$ strictly as $\ve \downarrow 0$, meaning that
    \[
    \mu_{\ve} \xrightharpoonup{*} \mu, \qquad |\mu_{\ve}|(\bR^n) \to |\mu|(\bR^n).
    \]
    More generally, let $\cA = \sum_{j=1}^n A_j \partial_j$ be a first-order linear operator and suppose that $\cA \mu \in \cM ( \bR^n; \bR^{\ell})$.
    Then, we have $\cA \mu_{\ve} = ( \rho_{\ve} \ast \cA \mu) \, \cL^n$ and $\cA \mu_{\ve} \to \cA \mu$ strictly.
    If the measures $\mu$ and $\cA \mu$ are compactly supported, both families of variation measures are uniformly tight.
\end{lemma}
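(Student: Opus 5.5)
We argue componentwise, reducing everything to standard facts about mollification of scalar Radon measures.

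\emph{Weak-$*$ convergence.} For $\varphi \in C_c(\bR^n;\bR^d)$, Fubini gives
\[
\int \varphi \cdot d\mu_\ve = \int (\check\rho_\ve \ast \varphi)\cdot d\mu, \qquad \check\rho_\ve(z) = \rho_\ve(-z).
\]
Since $\check\rho_\ve \ast \varphi \to \varphi$ uniformly with supports in a fixed compact set, we get $\mu_\ve \xrightharpoonup{*} \mu$.

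\emph{Convergence of total variation.} Write $\mu = h\,|\mu|$ with $|h|=1$ $|\mu|$-a.e. Then
\[
|\mu_\ve(x)| = \Bigl|\int \rho_\ve(x-y)\, h(y)\, d|\mu|(y)\Bigr| \le (\rho_\ve \ast |\mu|)(x).
\]
Integrating and using $\int \rho_\ve = 1$ yields $|\mu_\ve|(\bR^n) \le |\mu|(\bR^n)$. Conversely, lower semicontinuity of total variation under weak-$*$ convergence gives $|\mu|(\bR^n) \le \liminf_{\ve\downarrow 0} |\mu_\ve|(\bR^n)$. Together these prove strict convergence.

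\emph{The operator $\cA$.} Since $\cA$ has constant coefficients, it commutes with convolution in the sense of distributions:
\[
\cA(\rho_\ve \ast \mu) = \rho_\ve \ast \cA\mu.
\]
To verify this, test against $\psi \in C_c^\infty$ and move the derivatives $\partial_j$ onto $\check\rho_\ve \ast \psi$, using $\partial_j(\check\rho_\ve \ast \psi) = \check\rho_\ve \ast \partial_j\psi$. Because $\cA\mu$ is a finite measure, the previous two steps apply verbatim to it, giving $\cA\mu_\ve = (\rho_\ve\ast\cA\mu)\,\cL^n \to \cA\mu$ strictly.

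\emph{Tightness.} If $\operatorname{spt}\mu \cup \operatorname{spt}\cA\mu \subset B_R$, then for $\ve \le 1$ all the measures $\mu_\ve$ and $\cA\mu_\ve$ are supported in $B_{R+1}$, so both families are uniformly tight.

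The only step requiring care is the lower bound on total variation; it follows from the dual formula
\[
|\mu|(\bR^n) = \sup\Bigl\{ \int \varphi\cdot d\mu : \varphi \in C_c,\ |\varphi|\le 1 \Bigr\},
\]
combined with the weak-$*$ convergence already established.
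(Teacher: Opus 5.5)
Your proposal is correct and follows essentially the same route as the paper. You integrate the pointwise bound $|\rho_\ve \ast \mu| \leq \rho_\ve \ast |\mu|$ via Fubini (the paper calls this Jensen's inequality), combine it with weak-$*$ lower semicontinuity to get strict convergence, use that the constant-coefficient operator $\cA$ commutes with convolution so the argument transfers to $\cA\mu$, and obtain tightness from $\textup{spt}\,\mu_\ve \subset \textup{spt}\,\mu + B_{C\ve}$; your explicit Fubini checks of weak-$*$ convergence and of the commutation identity just fill in details the paper calls standard.
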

\begin{proof}
    The weak-* convergence is standard because $\{ \rho_{\ve} \}$ is an approximate identity.
    By Jensen's inequality, Fubini's theorem, and the weak-* lower semicontinuity of the total variation,
    \[
    |\mu_{\ve}|(\bR^n) = \int_{\bR^n} |\rho_{\ve} \ast \mu| \, dx \leq |\mu| (\bR^n) \leq \liminf_{\ve \downarrow 0} |\mu_{\ve}|(\bR^n),
    \]
    and hence $\mu_{\ve} \to \mu$ strictly; this proves the first assertion.
    Next, constant coefficient differential operators commute with convolution, so $\cA \mu_{\ve} = ( \rho_{\ve} \ast \cA \mu) \, \cL^n$.
    Hence, the same result applies to $\cA \mu$.
    Finally, for a compactly supported standard mollifier with $\textup{spt} \, \rho_{\ve} \subset B_{C \ve}$, we have
    \[
    \textup{spt} \, \mu_{\ve} \subset \textup{spt} \, \mu + B_{C \ve}, \qquad \textup{spt} \, \cA \mu_{\ve} \subset \textup{spt} \, \cA \mu + B_{C \ve}.
    \]
    Thus, the above measures are uniformly tight.
    This completes the proof.
\end{proof}

Under the strict convergence of measures as above, we can invoke Reshetnyak's continuity theorem (see, for example~\cite{reshetnyak} and~\cite{ambrosio-fusco-pallara}*{Theorem~2.39}) in the following form.
\begin{lemma}\label{lemma:reshetnyak-continuity}
    Let $\mu_j, \mu \in \cM( \Omega; \bR^d)$ be vector-valued measures on an open set $\Omega \subset \bR^n$ that converge strictly, $\mu_j \to \mu$, meaning that $\mu_j \xrightharpoonup{*} \mu$ and $|\mu_j|(\Omega) \to |\mu|(\Omega)$.
    Let $F: \Omega \times \bR^d \to \bR$ be a continuous, positively $1$-homogeneous function in the second variable satisfying a uniform linear bound,
    \[
    F(x,tz) = t \, F(x,z) \quad \text{for every } \; t \geq 0, \qquad |F(x,z)| \leq C |z| \quad \text{for every } \; (x,z) \in \Omega \times \bR^d.
    \]
    Then, it holds that
    \[
    \int_{\Omega} F \Bigl( \frac{d \mu_j}{d |\mu_j|} \Bigr) \, d |\mu_j| \to \int_{\Omega} F \Bigl( \frac{d \mu}{d |\mu|} \Bigr) \, d |\mu|.
    \]
    The same statement holds on the flat torus $\bT^n$.
\end{lemma}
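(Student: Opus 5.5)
This is Reshetnyak's continuity theorem, so I would prove it the classical way: pass to the weak-* limit of the joint measures on the product of $\Omega$ with the unit sphere $\bS^{d-1}$. For each $j$, let $\nu_j := \frac{d\mu_j}{d|\mu_j|}$, which is $|\mu_j|$-a.e. a unit vector, and lift $\mu_j$ to the positive measure
\[
\gamma_j := (\mathrm{id}, \nu_j)_{\#} |\mu_j| \quad \text{on } \Omega \times \bS^{d-1},
\]
and similarly $\gamma := (\mathrm{id},\nu)_{\#}|\mu|$. By one-homogeneity of $F$ in $z$, $\int F(x, \nu_j)\,d|\mu_j| = \int F(x,z)\, d\gamma_j(x,z)$. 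The claim therefore reduces to proving $\gamma_j \rightharpoonup \gamma$ narrowly on $\Omega \times \bS^{d-1}$, since $F$ restricted there is continuous and bounded by $C$.

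\emph{Tightness.} The total masses $\gamma_j(\Omega\times\bS^{d-1}) = |\mu_j|(\Omega)$ converge to $|\mu|(\Omega)$. Strict convergence also gives $|\mu_j| \rightharpoonup |\mu|$ narrowly. This follows from lower semicontinuity on open sets, $|\mu|(U) \leq \liminf |\mu_j|(U)$, combined with convergence of total mass, which yields convergence tested against bounded continuous functions. In particular, no mass escapes to $\partial\Omega$ or to infinity. Because $\bS^{d-1}$ is compact, the family $\{\gamma_j\}$ is tight, so every subsequence has a further subsequence converging narrowly to some $\gamma'$ with mass $|\mu|(\Omega)$ and $x$-marginal $|\mu|$.

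\emph{Identifying the limit (main step).} I would disintegrate $\gamma' = |\mu| \otimes \gamma'_x$, where each $\gamma'_x$ is a probability measure on $\bS^{d-1}$. Testing with $\psi(x) z$ for $\psi \in C_c(\Omega)$ and using $\mu_j \rightharpoonup^* \mu$ gives
\[
\int_{\bS^{d-1}} z \, d\gamma'_x(z) = \nu(x) \quad \text{for } |\mu|\text{-a.e. } x.
\]
Since $|\nu(x)|=1$ and $\gamma'_x$ is a probability measure on the unit sphere, strict convexity of the Euclidean ball (equality in Jensen's inequality for $|\cdot|$) forces $\gamma'_x = \delta_{\nu(x)}$. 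Hence $\gamma' = \gamma$, independently of the subsequence, so the whole sequence converges. This identification is the crux: it is exactly where total-mass convergence is used, because without it the limit could be a nontrivial Young measure on the sphere.

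The torus case is identical and in fact simpler, since $\bT^n$ is compact.
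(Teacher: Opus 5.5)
Your proof is correct, and it is the classical argument behind the result the paper cites without proof (Reshetnyak's theorem, as in Ambrosio--Fusco--Pallara, Theorem~2.39): you lift to $\Omega \times \bS^{d-1}$, upgrade to narrow convergence using the convergence of total mass, and identify the disintegration as $\delta_{\nu(x)}$ because its barycenter is a unit vector. One small correction to your closing remark: the total-mass convergence is used to show that the $x$-marginal of the limit is exactly $|\mu|$ rather than some $\lambda \geq |\mu|$, and it is this that forces the barycenter to have length one.
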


We will also require some results from the resolution of the vanishing mass theorem~\cite{vanishing-mass-conjecture} of Gennaioli and Rindler.
Their proof involves a smoothing procedure using the heat flow, with $\cP_T$ the heat semigroup.
A key step is their compensated compactness result~\cite{vanishing-mass-conjecture}*{Proposition 3.5}, whose estimates are weighted with the function $a_T := \cP_T \mathbf{1}_D$, for $D = (s,1-s)^m$ an $m$-dimensional cube, where $s \in (0,\frac{1}{2})$.
This result can be modified to an unweighted estimate.

\begin{lemma}\label{lemma:unweighted-GR-step}
Let $\phi$ be a concave barrier and consider a smooth function $A \in C_c^{\infty}(\bR^m ; \bR^{m \times m})$.
For a given $T \in (0,1]$, we suppose that
\[
\| \cP_T f \|_{L^{\infty}} < R_0 < e^{-1}R, \qquad \text{where } \; f := |A| + \sqrt{T} \, |\textup{Div} \, A|.
\]
We denote $N_{\phi} := \max_{ |Z| \leq 1 } \phi(Z)_+$.
Then, for every $\delta>0$, it holds that
\[
\int_{\bR^m} \phi(A) \, dx \leq \delta \int_{\bR^m} f + N_{\phi} \left( \frac{C(m,\phi,\delta)}{\log(R/R_0)} \int_{\bR^m} f + \int_{ \{ f \leq R \} } f \right).
\]
\end{lemma}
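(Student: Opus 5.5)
The plan is to adapt the proof of~\cite{vanishing-mass-conjecture}*{Proposition 3.5} by removing the cube weight $a_T$. The weight there localizes to the torus or cube. For compactly supported smooth $A$ on $\bR^m$ we can work globally, using $\int \cP_T g = \int g$ for integrable $g$.

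\textbf{Step 1: reduce to heat-smoothed quantities.} Write $A_T := \cP_T A$. By~\eqref{eqn:linear-bound},
\[
\int \phi(A) \le \int \phi(A_T) + C_\phi \int |A - A_T|.
\]
This error term is the wrong thing to estimate directly. Instead, I would follow Gennaioli--Rindler and compare $\phi(A)$ with $\cP_T[\phi(A)]$. Mass conservation gives
\[
\int \phi(A) = \int \cP_T \phi(A).
\]
Jensen's inequality for the concave $\phi$ against the heat kernel (a probability measure) gives $\cP_T \phi(A) \le \phi(\cP_T A)$ pointwise. Hence
\[
\int \phi(A) \le \int \phi(A_T).
\]
It therefore suffices to bound $\int \phi(A_T)$ from above.

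\textbf{Step 2: split by the size of $f$.} The idea is that $\phi(A_T)>0$ forces $A_T$ to be quantitatively away from the wave cone $\Lambda_{\rm Div}$. There $\textup{Div}$ is elliptic, and the bound $|\textup{Div}\, A_T|\le T^{-1/2}\cP_T f$ (from the $\sqrt T$ factor) penalizes this through an ellipticity/compensated-compactness estimate. I would split the domain into $\{f\le R\}$ and $\{f>R\}$.

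On the region where $f\le R$ one can only use $\phi(A_T)\le N_\phi|A_T|$. Since $|A_T|\le \cP_T f<R_0$ pointwise, after un-smoothing by Step 1 in reverse this yields the term $N_\phi\int_{\{f\le R\}} f$.

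On the high part $\{f>R\}$, $f$ is much larger than its average $\cP_T f < R_0$. There the estimate of~\cite{vanishing-mass-conjecture}*{Proposition 3.5} applies. It uses a logarithmic layering over dyadic levels $R_0 e^k\le f\le R_0e^{k+1}$, $k\le \log(R/R_0)$, and pigeonholes one good layer. In that layer, the compensated compactness estimate for the elliptic part (the symbol $A\xi$ is invertible on $\{\phi>0\}$) gives a contribution
\[
\delta\int f + N_\phi\,\frac{C(m,\phi,\delta)}{\log(R/R_0)}\int f.
\]
The $\delta$ term comes from approximating $\phi$ by a barrier that is strictly negative on a conic neighborhood of $\Lambda_{\rm Div}$.

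\textbf{Step 3: remove the weight.} Every integration by parts in~\cite{vanishing-mass-conjecture} that differentiated $a_T$ produced terms of size $T^{-1/2}|\nabla a_T|$. With weight $\equiv 1$ these vanish. The only remaining task is to check that the boundary terms at infinity disappear, which holds since $A$ has compact support and the heat kernel decays rapidly.

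The main obstacle is Step 2: extracting the $1/\log(R/R_0)$ gain exactly as in the cited proposition without the weight. I would reproduce their layer-cake argument verbatim with $a_T$ replaced by $1$, and verify at each step that the constants depend only on $m,\phi,\delta$.
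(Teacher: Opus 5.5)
Your Steps 2--3 do not amount to a proof. The whole quantitative content of the lemma, namely the $\delta$-term and the $1/\log(R/R_0)$ gain, is deferred to a claim that the proof of \cite{vanishing-mass-conjecture}*{Proposition 3.5} can be rerun verbatim with $a_T\equiv 1$. Your description of that proof (which integrations by parts hit $a_T$, a dyadic layering and a pigeonhole step) is conjectural rather than checked.

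The parts you do spell out are not coherent. After Step 1 you integrate $\phi(A_T)$ but split the domain according to $f$. The bound $\int_{\{f\le R\}}\phi(A_T)\le N_\phi\int_{\{f\le R\}}f$ fails in general: $A_T$ does not vanish on $\{f=0\}$ near the support of $A$, and it is not controlled pointwise by $f$. There is also no ``Step 1 in reverse'': Jensen only gives $\int\phi(A)\le\int\phi(\cP_TA)$, and only globally, not on subregions. Finally, the levels $R_0e^k\le f\le R_0e^{k+1}$ with $k\le\log(R/R_0)$ span $R_0\le f\le eR$. That is essentially the low region, not the high part $\{f>R\}$ where you invoke them.

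The missing idea is that you never need to open up Gennaioli--Rindler. Their weighted estimate can be used as a black box, and the weight removed by a partition of unity; this is exactly what the paper does. Fix $s\in(0,\tfrac12)$, $D=(s,1-s)^m$ with side $\ell=1-2s$, and the translates $D_k=D+\ell k$, $k\in\bZ^m$, which tile $\bR^m$ up to a null set. Then the weights $a_{T,k}:=\cP_T\mathbf{1}_{D_k}$ satisfy $\sum_k a_{T,k}=\cP_T1=1$. The hypothesis $\|\cP_Tf\|_{L^\infty}<R_0<e^{-1}R$ and the proposition are translation invariant. Applying the proposition to $A(\cdot+\ell k)$ therefore gives, with $q=-\phi$ and for every $k$,
\[
\int a_{T,k}\,q(A)\geq-\delta\int a_{T,k}f-N_\phi\Bigl(\frac{C(m,\phi,\delta)}{\log(R/R_0)}\int a_{T,k}f+\int_{\{f\le R\}}a_{T,k}f\Bigr).
\]
Every term is linear in the weight. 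All sums over $k$ converge absolutely because $|q(A)|\le C_\phi|A|$ and $A,f\in L^1$. Summing over $k$ then yields exactly the unweighted inequality, with no need to re-derive the compensated compactness estimate.
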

\begin{proof}
We write $q := - \phi$.
As observed above, $q$ is convex, positively one-homogeneous, and non-negative on the wave cone $\Lambda_{\textup{Div}}$ for row-wise divergence, so~\cite{vanishing-mass-conjecture}*{Proposition 3.5} applies, first in weighted form.
We fix once and for all a parameter $s \in (0, \frac{1}{2})$ and write $D = (s, 1-s)^m$ for the cube appearing in their result, with side length $\ell := 1-2s$.
For $k \in \bZ^m$, we define $D_k := D + \ell k$ and $a_{T,k} := \cP_T \mathbf{1}_{D_k}$.
Because~\cite{vanishing-mass-conjecture}*{Proposition 3.5} is translation invariant, it applies, after a change of variables, to every translated cube $D_k$ and $A_k(x) := A(x + \ell k)$ in place of $D, A$, with the same constants and the same numbers $R_0,R$.
For brevity, we write $M_k := \int a_{T,k} f$ and $M_{R,k} := \int_{ \{ f \leq R \} } a_{T,k} f$, so this result gives
\[
\int a_{T,k} q(A) \geq - \delta M_k - N_q \Bigl( \frac{C(m,\phi,\delta)}{\log(R/R_0)} M_k + M_{R,k} \Bigr)
\]
with constant $N_q = \max_{ |Z| \leq 1 } \{ -q(Z),0\} = N_{\phi}$.
Since $q$ has linear growth and the functions $A,f \in L^1$, all the relevant sums are absolutely convergent and satisfy
\begin{equation}\label{eqn:sum-atk-q(A)}
\sum_k \int a_{T,k} \, |q(A)| = \int \Bigl( \sum_k a_{T,k} \Bigr) \, |q(A)| \leq C_{\phi} \int |A| < \infty
\end{equation}
for a constant $C_{\phi}$.
The same argument applies to the non-negative numbers $M_k, M_{R,k}$.
The cubes $D_k$ partition $\bR^m$ up to a null set, so the positivity and linearity of the heat semigroup imply
\begin{equation}\label{eqn:sum-atk-heat-semigroup}
    \sum_{k \in \bZ^m} a_{T,k} = \cP_T \Bigl( \sum_{k \in \bZ^m} \mathbf{1}_{D_k} \Bigr) = 1.
\end{equation}
We sum over $k$ and combine the properties~\eqref{eqn:sum-atk-q(A)} and~\eqref{eqn:sum-atk-heat-semigroup}, together with 
\[
\sum_k M_k = M, \qquad \sum_k M_{R,k} = M_R, \qquad \sum_k \int a_{T,k} q(A) = \int q(A).
\]
Our assertion follows from summing the weighted inequalities~\eqref{eqn:sum-atk-q(A)} and substituting $q=-\phi$.
\end{proof}

\begin{proposition}\label{prop:GR-barrier-compactness}
Let $\phi$ be a concave barrier and let $\Phi$ be its associated function on vector-valued measures, defined in~\eqref{eqn:Lambda-m-definition}.
Consider a sequence of measures $\mathbf{A}_j \in \cM( \bR^m ; \bR^{m \times m}) $ with finite row-wise divergence such that $\sup_j |\mathbf{A}_j| ( \bR^m ) < \infty$, $|\textup{Div} \, \mathbf{A}_j| (\bR^m) \to 0$, and each $\mathbf{A}_j$ is concentrated on a Borel set $E_j \subset \bR^m$ with $\cL^m(E_j) \to 0$.
Then, we have $\limsup_{j \to \infty} \Phi( \mathbf{A}_j) \leq 0$.
\end{proposition}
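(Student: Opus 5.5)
Mollify, apply Lemma~\ref{lemma:unweighted-GR-step} to the smooth approximants, and pass back to $\mathbf{A}_j$ using Reshetnyak continuity (Lemma~\ref{lemma:reshetnyak-continuity}). Fix $j$ and set $A_{j,\ve} := \rho_\ve \ast \mathbf{A}_j$. By Lemma~\ref{lemma:strict-approximation}, $A_{j,\ve}\to\mathbf{A}_j$ strictly and $\textup{Div}\,A_{j,\ve} = \rho_\ve\ast \textup{Div}\,\mathbf{A}_j$. Since $\phi$ is continuous, one-homogeneous and has linear growth~\eqref{eqn:linear-bound}, Lemma~\ref{lemma:reshetnyak-continuity} gives $\int\phi(A_{j,\ve})\,dx \to \Phi(\mathbf{A}_j)$. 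We may also truncate with a smooth cutoff so that $A_{j,\ve}\in C_c^\infty$; the error this introduces in the mass and in the divergence tends to zero.

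Choose $\ve=\ve_j$ small, so that it suffices to bound $\limsup_j \int \phi(A_j)\,dx$ for smooth $A_j := A_{j,\ve_j}$. These satisfy three properties:
\begin{itemize}
\item[(a)] $\|A_j\|_{L^1}\le M$ uniformly in $j$;
\item[(b)] $\|\textup{Div}\,A_j\|_{L^1}\to0$;
\item[(c)] $A_j$ is large only on a small set, in the sense that $\int_{\{|A_j|\le R\}}|A_j|\,dx$ is small for each fixed $R$.
\end{itemize}
Property (c) needs justification. Since $\mathbf{A}_j$ is concentrated on $E_j$ with $\cL^m(E_j)\to0$, any absolutely continuous part of $\mathbf{A}_j$ lives on a small set. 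For fixed $R$ we have
\[
\int_{\{|A_j|\le R\}}|A_j| \le R\,\cL^m(\{|A_j|\le R\}\cap U) + |A_j|(U^c)
\]
for an open neighborhood $U$ of $E_j$ of small measure. Because $\mathbf{A}_j$ is concentrated on $E_j$, the term $|A_j|(U^c)$ is small when $\ve_j$ is small relative to $U$, using strict convergence. A diagonal argument over $R\to\infty$ and $\ve_j\to0$ then yields $\int_{\{f_j\le R\}}f_j\to0$ for each fixed $R$, where $f_j := |A_j|+\sqrt{T}|\textup{Div}\,A_j|$. The divergence part contributes at most $\|\textup{Div}\,A_j\|_{L^1}\to0$.

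Next apply Lemma~\ref{lemma:unweighted-GR-step} to $A_j$ with a fixed $T\in(0,1]$. The heat semigroup gives $\|\cP_T f_j\|_{L^\infty}\le (4\pi T)^{-m/2}\|f_j\|_{L^1} \le C T^{-m/2}(M+1) =: R_0$, uniformly in $j$. For any $R>eR_0$ and $\delta>0$, the lemma yields
\[
\int\phi(A_j)\le \delta(M+1) + N_\phi\Bigl(\frac{C(m,\phi,\delta)}{\log(R/R_0)}(M+1) + \int_{\{f_j\le R\}}f_j\Bigr).
\]
Let $j\to\infty$, so the last term vanishes. Then let $R\to\infty$ with $T$, and hence $R_0$, fixed. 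Finally let $\delta\to0$. This gives $\limsup_j\Phi(\mathbf{A}_j)\le0$.

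The main obstacle is step (c): converting $\cL^m(E_j)\to0$ into a statement that mollified densities rarely have small-to-moderate size while carrying mass. The mollification parameter must be chosen depending on $j$ and $R$, uniformly enough for a diagonal argument. The safest way to do this is to use inner regularity of $|\mathbf{A}_j|$ on $E_j$ by a compact set $K_j$, take $U_j=K_j+B_{\eta_j}$ with $\cL^m(U_j)\le 2\cL^m(E_j)+o(1)$, and take $\ve_j<\eta_j$. Then $\int_{\{f_j\le R\}}f_j\le R\,\cL^m(U_j)+|\mathbf{A}_j|(E_j\setminus K_j)+o(1)$, and this bound does not depend on the precise value of $\ve_j$ once $\ve_j<\eta_j$.
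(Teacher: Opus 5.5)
Your proposal is correct and follows essentially the same route as the paper. You cut off and mollify, use Reshetnyak continuity to pass from $\Phi(\mathbf{A}_j)$ to $\int\phi(A_j)\,dx$, and control $\int_{\{f_j\le R\}}f_j$ through an inner-regular compact $K_j\subset E_j$ and the splitting $\rho_{\eta_j}\ast(\mathbf{A}_j\mres K_j)+\rho_{\eta_j}\ast(\mathbf{A}_j\mres K_j^c)$. You then apply Lemma~\ref{lemma:unweighted-GR-step} at a fixed $T$ and let $j\to\infty$, $R\to\infty$, $\delta\downarrow0$ in that order.

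The differences are cosmetic. You cut off after mollifying rather than before, and the diagonal argument over $R$ you mention is unnecessary, since $R\,\cL^m(U_j)+|\mathbf{A}_j|(E_j\setminus K_j)\to0$ already holds for each fixed $R$.
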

\begin{proof}
Since $\phi$ is continuous and positively one-homogeneous, there is a
constant $C_\phi<\infty$ such that $|\phi(A)|\leq C_\phi|A|$.
The proof proceeds in a number of steps, ultimately reducing the statement to an application of~\cite{vanishing-mass-conjecture}*{Proposition 3.5} in the unweighted form obtained in Lemma~\ref{lemma:unweighted-GR-step}.
We note that Proposition~\ref{prop:GR-barrier-compactness} could also be obtained through an argument by contradiction, after convolving with a periodic mollifier and reducing the statement to~\cite{vanishing-mass-conjecture}*{Proposition 3.1}.

\smallskip \noindent \textbf{Step 1.}
We first reduce the statement to compactly supported measures: choose a sequence of radii $R_j \to \infty$ so that $|\mathbf{A}_j|( \bR^m \setminus B_{R_j}) \leq j^{-1}$ and let $\chi_j \in C_c^{\infty}(\bR^m)$ be cutoff functions with $0 \leq \chi_j \leq 1$, $\chi_j = 1$ on $B_{R_j}$ and $\chi_j = 0$ outside $B_{2R_j}$, and with $|\nabla \chi_j| \leq C R_j^{-1}$.
We now consider the measures $\tilde{\mathbf{A}}_j := \chi_j \mathbf{A}_j$.
Multiplication by a non-negative scalar function preserves the polar direction, hence
\begin{equation}\label{eqn:cutoff-Phi}
|\Phi( \tilde{\mathbf{A}}_j) - \Phi( \mathbf{A}_j)| \leq C_{\phi} |\mathbf{A}_j|(\bR^m \setminus B_{R_j}) \leq C_{\phi} j^{-1}.
\end{equation}
Moreover, the product rule gives $\textup{Div} \, \tilde{\mathbf{A}}_j = \chi_j \, \textup{Div} \, \mathbf{A}_j + \mathbf{A}_j\nabla \chi_j$, and hence
\[
|\textup{Div} \, \tilde{\mathbf{A}}_j| (\bR^m) \leq |\textup{Div} \, \mathbf{A}_j| (\bR^m) + C R_j^{-1} \, |\mathbf{A}_j|(\bR^m) \to 0 \qquad \text{as } \; j \to \infty.
\]
Finally, the measure $\tilde{\mathbf{A}}_j$ is concentrated on the set $E_j \cap B_{2R_j}$, whose Lebesgue measure tends to zero as $j \to \infty$.
In view of~\eqref{eqn:cutoff-Phi}, we may therefore replace
$\mathbf{A}_j$ by $\tilde{\mathbf{A}}_j$ and assume from now on that the measures
are compactly supported.

\smallskip \noindent \textbf{Step 2.}
Let $\rho_{\eta}$ be a standard compactly supported mollifier.
We claim that, for every $j$, we can choose a sufficiently small $0 < \eta_j < j^{-1}$ so that the measure $A_j := \rho_{\eta_j} \ast \mathbf{A}_j$ satisfies
\begin{equation}\label{eqn:phi(Aj)-Ej}
    \Bigl| \Phi( \mathbf{A}_j) - \int_{\bR^m} \phi(A_j) \, dx \Bigr| \leq j^{-1} \qquad \text{and} \qquad \bigl| \{ x \in E^c_j : |A_j(x)| > j^{-1}\} | \leq j^{-1}.
\end{equation}
By the inner regularity of Radon measures, we can first choose a compact set $K_j \subset E_j$ such that $|\mathbf{A}_j| (\bR^m \setminus K_j) \leq j^{-1}$.
Using the strict convergence under convolution $\rho_{\eta} \ast \mathbf{A} \to \mathbf{A}$ as $\eta \downarrow 0$ for compactly supported measures from Lemma~\ref{lemma:strict-approximation}, as well as Reshetnyak's continuity theorem~\ref{lemma:reshetnyak-continuity} for the continuous, positively one-homogeneous function $\phi$, we can produce a sufficiently small $0 < \eta_j < j^{-1}$ so that the first part of~\eqref{eqn:phi(Aj)-Ej} is satisfied.
The second property follows after decreasing $\eta_j$ further and using the upper continuity of the Lebesgue measure for the decreasing compact neighborhoods $K_j + B_{\eta}$, which contain $\textup{spt} ( \rho_{\eta} \ast (\mathbf{A}_j \mres K_j))$.
Applying the Lebesgue-Besicovitch differentiation theorem~\cite{maggi}*{Theorem~5.8} to the finite vector-valued measures $\mathbf{A}_j$, we also see that $(\rho_{\ve} \ast \mathbf{A}_j)(x) \to \frac{d \mathbf{A}_j}{d \cL^m}(x)$ as $\eta \downarrow 0$, for $\cL^m$-a.e.~$x$.
Since $\mathbf{A}_j$ is concentrated on $E_j$, the density on the right-hand side vanishes for a.e.~$x \in E^c_j$.
This proves~\eqref{eqn:phi(Aj)-Ej}.

\smallskip \noindent \textbf{Step 3.}
We now decompose the smooth function $A_j$ as
\[
A_j := \rho_{\eta_j} \ast ( \mathbf{A}_j \mres K_j) + \rho_{\eta_j} \ast ( \mathbf{A}_j \mres (\bR^m \setminus K_j) ) =: \tilde{A}_j + W_j.
\]
The first term is supported on the compact region $\tilde{K}_j := K_j + B_{\eta_j}$, with $\cL^m( \tilde{K}_j) \to 0$ and
\[
\| W_j \|_{L^1} \leq |\mathbf{A}_j| (\bR^m \setminus K_j ) \leq j^{-1}, \qquad \cL^m ( \tilde{K}_j) \leq \cL^m(K_j) + j^{-1} \leq \cL^m(E_j) + j^{-1} \to 0
\]
by the above construction~\eqref{eqn:phi(Aj)-Ej}.
Moreover, for every fixed $\delta>0$ and sufficiently large $j$, we have
\begin{align*}
j^{-1} < \delta & \implies \{ |A_j| > \delta \} \subset E_j \cup \{ x \in E^c_j : |A_j(x)| > j^{-1}\}, \\
& \implies | \{ |A_j| > \delta \} | \leq |E_j| + j^{-1} \to 0.
\end{align*}
Likewise, for every fixed $R< \infty$, we use $\tilde{A}_j = 0$ outside $\tilde{K}_j$, so $A_j = W_j$ there, to find
\begin{equation}\label{eqn:Aj-leq-R-goes-to-zero}
\int_{ \{ |A_j| \leq R \} } |A_j| \, dx \leq R \cL^m ( \tilde{K}_j) + \|W_j \|_{L^1} \to 0.
\end{equation}
In particular, $A_j \to 0$ in measure.
Finally, we obtain
\begin{equation}\label{eqn:div-goes-to-zero}
\textup{Div} \, A_j = \rho_{\eta_j} \ast ( \textup{Div} \, \mathbf{A}_j), \qquad \implies \qquad \| \textup{Div} \, A_j \|_{L^1} \leq |\textup{Div} \, \mathbf{A}_j|(\bR^m) \to 0
\end{equation}
because convolution commutes with row-wise divergence.

\smallskip \noindent \textbf{Step 4.}
We now complete the proof.
We will apply Lemma~\ref{lemma:unweighted-GR-step} with $T=1$.
For this, we set
\[
f_j := |A_j| + |\textup{Div} \, A_j|, \qquad M_j := \int_{\bR^m} f_j, \qquad M_{j,R} := \int_{ \{ f_j \leq R \} } f_j.
\]
The numbers $M_j$ are uniformly bounded, $M_* := \sup_j M_j < \infty$, and satisfy
\begin{align*}
M_j &= \|A_j \|_{L^1} + \| \textup{Div} \, A_j \|_{L^1} \leq |\mathbf{A}_j|(\bR^m) + |\textup{Div} \, \mathbf{A}_j|(\bR^m), \\
M_{j,R} &\leq \int_{ \{ |A_j| \leq R \} } |A_j| + \| \textup{Div} \, A_j \|_{L^1} \to 0
\end{align*}
in view of the properties~\eqref{eqn:Aj-leq-R-goes-to-zero} and~\eqref{eqn:div-goes-to-zero}.
Also, $\| \cP_1 f_j \|_{L^{\infty}} \leq ( 4\pi)^{- \frac{m}{2}} M_j$ by the
standard $L^1-L^{\infty}$ estimate for the Euclidean heat kernel; see, for example,~\cite{cheng-li-yau}.
Thus, we can choose $R_0$ sufficiently large, once and for all, so that $R_0 > ( 4\pi)^{- \frac{m}{2}} \sup_j M_j$ for every $j$, and hence $\| \cP_1 f_j \|_{L^{\infty}} < R_0$.
Finally, we can apply Lemma~\ref{lemma:unweighted-GR-step} with $T=1$: for fixed $\delta>0$ and $R>eR_0$, we obtain
\[
\int \phi(A_j) \leq \delta M_j + N_{\phi} \Bigl( \frac{C(m,\phi, \delta)}{\log (R/R_0)} M_j + M_{j,R} \Bigr).
\]
We first send $j \to \infty$ and use $M_* := \sup_j M_j < \infty$.
For fixed $\delta>0$ and $R>eR_0$, after taking $\limsup_{j \to \infty}$, the tail property $\limsup_{j \to \infty} M_{j,R} = 0$ implies that
\[
\limsup_{j \to \infty} \int \phi(A_j) \, dx \leq \delta M_* + N_{\phi} C(m,\phi,\delta) \, \log(R/R_0)^{-1} \, M_*.
\]
In the next step, we send $R \to \infty$, followed by $\delta \downarrow 0$, to obtain
\[
\limsup_{j \to \infty} \int \phi(A_j) \, dx \leq \delta M_*, \qquad \overset{\delta \downarrow 0}{\implies} \qquad \limsup_{j \to \infty} \int \phi(A_j) \, dx \leq 0.
\]
Together with the property~\eqref{eqn:phi(Aj)-Ej}, this completes the proof of our assertion.
\end{proof}

We now use Proposition~\ref{prop:GR-barrier-compactness} to prove a quantitative non-concentration theorem for vector-valued measures.
Note that the matrix-valued measures below are not assumed compactly supported.
\begin{corollary}\label{cor:concave-barrier}
Let $\phi$ be a concave barrier with associated function $\Phi$ on measures, defined in~\eqref{eqn:Lambda-m-definition}.
For every $L < \infty$ and $\eta>0$, there is a $c = c(m,\phi,L,\eta)>0$ with the following property.

Consider some $\mathbf{A} \in \cM( \bR^m ; \bR^{m \times m})$ with finite row-wise divergence, concentrated on a Borel set $E$ of finite Lebesgue measure, and satisfying $|\mathbf{A}|(\bR^m) \leq L$ and $\Phi(\mathbf{A}) \geq \eta$.
Then, it holds that 
\[
\cL^m(E)^{\frac{1}{m}} |\textup{Div} \, \mathbf{A}| (\bR^m) \geq c.
\]
\end{corollary}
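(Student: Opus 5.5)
We argue by contradiction, combining Proposition~\ref{prop:GR-barrier-compactness} with the scaling symmetry of the problem. The key observation is that the quantity $\cL^m(E)^{\frac{1}{m}} |\textup{Div} \, \mathbf{A}|(\bR^m)$ is invariant under the dilation that normalizes $\cL^m(E)$, while the mass $|\mathbf{A}|(\bR^m)$ and the barrier functional $\Phi(\mathbf{A})$ are unchanged. Precisely, for $\lambda>0$ set $\mathbf{A}^\lambda := (\delta_\lambda)_{\#} \mathbf{A}$ with $\delta_\lambda(x) = \lambda x$.

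Then $\mathbf{A}^\lambda$ is concentrated on $\lambda E$, and $\cL^m(\lambda E) = \lambda^m \cL^m(E)$. The pushforward preserves total variation and polar densities, so $|\mathbf{A}^\lambda|(\bR^m) = |\mathbf{A}|(\bR^m)$ and $\Phi(\mathbf{A}^\lambda) = \Phi(\mathbf{A})$; here we use that $\Phi$ may be computed with $\sigma = |\mathbf{A}|$ and that $\phi$ is $1$-homogeneous. For the divergence, testing with $X \in C^1_c$ gives
\[
\int \la DX, d\mathbf{A}^\lambda \rg = \lambda \int \la (DX)\circ\delta_\lambda, d\mathbf{A}\rg = \int \la D(X\circ\delta_\lambda), d\mathbf{A}\rg,
\]
so $\textup{Div}\,\mathbf{A}^\lambda = \lambda^{-1} (\delta_\lambda)_{\#}\textup{Div}\,\mathbf{A}$ and $|\textup{Div}\,\mathbf{A}^\lambda|(\bR^m) = \lambda^{-1}|\textup{Div}\,\mathbf{A}|(\bR^m)$. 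The product $\cL^m(E)^{\frac1m}|\textup{Div}\,\mathbf{A}|(\bR^m)$ is therefore dilation invariant.

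Suppose the claim fails for some $L, \eta$. Then there are $\mathbf{A}_j$ concentrated on $E_j$ with $|\mathbf{A}_j|(\bR^m)\le L$, $\Phi(\mathbf{A}_j)\ge\eta$, and $\cL^m(E_j)^{\frac1m}|\textup{Div}\,\mathbf{A}_j|(\bR^m) \le 1/j^2$. If $\cL^m(E_j)=0$, the measure $\mathbf{A}_j$ is already concentrated on a null set and needs no rescaling. Otherwise we choose $\lambda_j$ with $\cL^m(\lambda_j E_j) = j^{-m}$, so that $\lambda_j = j^{-1}\cL^m(E_j)^{-\frac1m}$. The rescaled measures then satisfy
\[
|\textup{Div}\,\mathbf{A}_j^{\lambda_j}|(\bR^m) = j\,\cL^m(E_j)^{\frac1m}|\textup{Div}\,\mathbf{A}_j|(\bR^m) \le j^{-1} \to 0,
\]
with uniformly bounded mass $L$, concentration sets of measure at most $j^{-m}\to 0$, and $\Phi(\mathbf{A}^{\lambda_j}_j)\ge \eta$. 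Proposition~\ref{prop:GR-barrier-compactness} gives $\limsup_j \Phi(\mathbf{A}_j^{\lambda_j})\le 0$, which contradicts $\eta>0$.

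One degenerate case must be checked: when $\cL^m(E)=0$ and the product is $0$, the statement asserts that no such $\mathbf{A}$ exists. This is covered by the same argument, taking the constant sequence $\mathbf{A}_j = \mathbf{A}$ with $E_j=E$ and using $\textup{Div}\,\mathbf{A}=0$ after rescaling if needed, or directly via Proposition~\ref{prop:GR-barrier-compactness} when the divergence is positive but $\cL^m(E)=0$. The main care point is the scaling computation for the divergence and checking that $\Phi$ is invariant under pushforward. The genuine analytic difficulty has already been absorbed into Proposition~\ref{prop:GR-barrier-compactness}.
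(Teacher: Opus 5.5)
Your strategy matches the paper's proof: argue by contradiction, dilate so that the concentration sets shrink in Lebesgue measure while the divergence mass tends to zero, and then invoke Proposition~\ref{prop:GR-barrier-compactness}. Extracting $\cL^m(E_j)^{1/m}|\textup{Div}\,\mathbf{A}_j|(\bR^m)\le j^{-2}$ and normalizing $\cL^m(\lambda_j E_j)=j^{-m}$ is an equivalent substitute for the paper's choice $r_j=(d_j/e_j)^{1/2}$, and it handles the case $d_j=0<e_j$ correctly.

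However, the case $\cL^m(E_j)=0$ is mishandled. Your claim that such $\mathbf{A}_j$ ``need no rescaling'' is false. When $e_j=0$, the smallness of the product says nothing about $d_j:=|\textup{Div}\,\mathbf{A}_j|(\bR^m)$, which may equal $1$ for every $j$. Proposition~\ref{prop:GR-barrier-compactness} requires $|\textup{Div}\,\mathbf{A}_j|(\bR^m)\to 0$, so it cannot be applied to such a sequence. You must still blow up, for instance by $\lambda_j=j(1+d_j)$ as the paper does. Then $\lambda_jE_j$ is still Lebesgue-null, the mass and $\Phi$ are unchanged, and the divergence becomes $\lambda_j^{-1}d_j\le j^{-1}$.

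Your closing paragraph repeats this confusion. No dilation makes a nonzero divergence vanish; it only makes it small. Likewise, the Proposition cannot be applied ``directly'' to a constant sequence with positive divergence. Once the $e_j=0$ terms are rescaled as above, that paragraph is unnecessary.

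There is also a slip in the divergence scaling display, the step you flagged as the main care point. By definition of the pushforward, $\int\la DX,d\mathbf{A}^\lambda\rg=\int\la (DX)\circ\delta_\lambda,d\mathbf{A}\rg$, with no factor $\lambda$. Combined with $D(X\circ\delta_\lambda)=\lambda\,(DX)\circ\delta_\lambda$, this gives $\int\la DX,d\mathbf{A}^\lambda\rg=\lambda^{-1}\int\la D(X\circ\delta_\lambda),d\mathbf{A}\rg$. Your chain, with its spurious factor $\lambda$, would instead yield $\textup{Div}\,\mathbf{A}^\lambda=(\delta_\lambda)_{\#}\textup{Div}\,\mathbf{A}$. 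The formula you then state, $\textup{Div}\,\mathbf{A}^\lambda=\lambda^{-1}(\delta_\lambda)_{\#}\textup{Div}\,\mathbf{A}$, is the correct one and agrees with the paper.
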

\begin{proof}
We argue by contradiction.
Suppose that this property fails, so there exists some $L < \infty$ and $\eta>0$ along with a sequence of measures $\mathbf{A}_j$ supported on Borel sets that satisfy the bounds $|\mathbf{A}_j|(\bR^m) \leq L$ and $\Phi( \mathbf{A}_j) \geq \eta$, but have
\[
e_j d_j \to 0 , \qquad \text{where}\qquad  e_j := \cL^m(E_j)^{\frac{1}{m}} \quad\text{and}\quad d_j := |\textup{Div} \, \mathbf{A}_j|(\bR^m).
\]
We choose a sequence of radii $r_j>0$ such that $r_j e_j \to 0$ and $r_j^{-1} d_j \to 0$.
When $e_j, d_j> 0$, we can take $r_j = (d_j/ e_j)^{\frac{1}{2}}$; if $e_j d_j = 0$ for all but finitely many members of the sequence, we let
\[
r_j := j(1+d_j) \quad \text{when } \; e_j = 0, \qquad r_j := [ j(1+e_j)]^{-1} \qquad \text{when } \; d_j = 0 < e_j.
\]
We dilate the measures $\mathbf{A}_j$ by the homotheties $D_{r_j}(x) := r_j x$ and let $\tilde{\mathbf{A}}_j := (D_{r_j})_{\#} \mathbf{A}_j$.
Since $D_r$ is a bijection and $(D_{r_j})_{\#}$ is the ordinary pushforward of a matrix-valued measure, we find 
\begin{equation}\label{eqn:pushforwards}
| (D_{r_j})_{\#} \mathbf{A}_j| = (D_{r_j})_{\#} |\mathbf{A}_j|, \qquad |(D_{r_j})_{\#} ( \textup{Div} \, \mathbf{A}_j)| = (D_{r_j})_{\#} | \textup{Div} \, \mathbf{A}_j|
\end{equation}
Moreover, a direct computation in the definition~\eqref{eqn:divergence-definition}, testing with rescaled vector fields $X$ and using the change of variables formula, shows that
\begin{equation}\label{eqn:divergence-drifting}
    \textup{Div} \, \tilde{\mathbf{A}}_j = r_j^{-1} (D_{r_j})_{\#} (\textup{Div} \, \mathbf{A}_j) \quad 
    \implies \quad | \textup{Div} \, \tilde{\mathbf{A}}_j |(\bR^m) = r_j^{-1} |\textup{Div} \, \mathbf{A}_j|(\bR^m) = r_j^{-1} d_j \to 0
\end{equation}
using property~\eqref{eqn:pushforwards}.
This fact also shows that the total mass is preserved under pushforwards, and the positive one-homogeneity of $\phi$ implies that $\Phi$ is preserved as well.
Therefore, 
\[
|\tilde{\mathbf{A}}_j|(\bR^m) = |\mathbf{A}_j|(\bR^m) \leq L \qquad \text{and} \qquad \Phi( \tilde{\mathbf{A}}_j) = \Phi( \mathbf{A}_j) \geq \eta.
\]
Finally, the rescaled measures are supported on $D_{r_j}(E_j)$, where $\cL^m(D_{r_j}(E_j))^{\frac{1}{m}} = r_je_j \to 0$.
Thus, they satisfy the conditions of Proposition~\ref{prop:GR-barrier-compactness}, which implies $\limsup_{j \to \infty} \Phi( \tilde{\mathbf{A}}_j) \leq 0$; this contradicts $\Phi( \tilde{\mathbf{A}}_j) \geq \eta>0$,  thereby proving our assertion.
\end{proof}

\section{Proof of the main theorem}

We now prove Theorem~\ref{thm:anisotropic-MS}.
As discussed in the introduction, our tools lead to a much more general result for every anisotropy satisfying a spectral condition that is open in the $C^1$ topology.
We will formulate and prove this result as follows.
For this, we recall the notation $\kappa(\Psi, \Gamma, \phi, \rho)$ introduced in~\eqref{eqn:kappa-Psi-Gamma-phi-rho}, for $\Psi, \Gamma, \phi, \rho$ respectively an anisotropy, a Borel set $\Gamma \subset \bG(N,m)$, a concave barrier, and a probability measure on $\bG(N,m)$.

\begin{theorem}\label{thm:most-general-michael-simon}
Consider a Borel set $\Gamma \subset \bG(N,m)$.
For every $\kappa_* > 0$ and $B_* < \infty$, there is a constant $C(N,\phi,\kappa_*, B_*)$ so that for every anisotropy with
\begin{equation}\label{eqn:open-condition}
\textstyle \kappa(\Psi,\Gamma,\phi,\rho) > \kappa_* > 0, \qquad \sup_{T \in \Gamma} \|B_{\Psi}(T) \| < B_*< \infty,
\end{equation}
every rectifiable $m$-varifold $V = v(M,\theta)$ with $T_x M \in \Gamma$ for $\cH^m$-a.e.~$x$ satisfies
\[
\| V\| (\bR^N) \leq C \, \cH^m(M)^{\frac{1}{m}} |\delta_{\Psi}V |(\bR^N).
\]
\end{theorem}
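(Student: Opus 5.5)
The proof combines Lemma~\ref{lem:averagedCoercivityGeneral}, Lemma~\ref{lemma:A-P-projection-properties} and Corollary~\ref{cor:concave-barrier}, after normalizing the varifold by scaling. Without loss of generality $\|V\|(\bR^N)<\infty$, $|\delta_\Psi V|(\bR^N)<\infty$ and $\cH^m(M)<\infty$; otherwise the inequality is trivial or vacuous. If the total mass is infinite, I would first treat finite-mass varifolds as stated. Since both sides are positively one-homogeneous in the multiplicity $\theta$, I may rescale $\theta$ so that $\|V\|(\bR^N)=1$. The case $\|V\|=0$ is trivial. The support set $M$ is unchanged by this rescaling.

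Fix $P$ as given by Lemma~\ref{lem:averagedCoercivityGeneral}, so that $\Phi(\mathbf A_P)\ge \kappa(\Psi,\Gamma,\phi,\rho)\|V\|(\bR^N)\ge\kappa_*$. Next I need the mass bound $|\mathbf A_P|(\bR^m)\le C_m B_*$. Lemma~\ref{lemma:A-P-projection-properties} is stated under a global operator-norm hypothesis, but its mass estimate only uses $\|B_\Psi(T)\|$ for $V$-a.e.\ $T$. Since $T\in\Gamma$ $V$-a.e., the bound $\sup_\Gamma\|B_\Psi\|<B_*$ suffices. The divergence bound $|\textup{Div}\,\mathbf A_P|(\bR^m)\le|\delta_\Psi V|(\bR^N)$ uses only the first variation formula together with the cutoff argument, and the cutoff error term is again controlled by $B_*\|V\|(\bR^N)$. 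So I would rerun that proof with $C'_N$ replaced by $B_*$.

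Then $\mathbf A_P$ is concentrated on the Borel set $G_P$, and by the area formula $\cL^m(G_P)\le\cH^m(M)$. Apply Corollary~\ref{cor:concave-barrier} with $L=C_mB_*$, $\eta=\kappa_*$ and $E=G_P$. This yields
\[
c(m,\phi,C_mB_*,\kappa_*)\le \cL^m(G_P)^{1/m}|\textup{Div}\,\mathbf A_P|(\bR^m)\le \cH^m(M)^{1/m}|\delta_\Psi V|(\bR^N).
\]
Undoing the normalization gives the inequality with $C=1/c$. Scaling homogeneity is what turns the qualitative bound $\ge c$ into a linear estimate: both $\|V\|$ and $|\delta_\Psi V|$ scale linearly in $\theta$, while $\cH^m(M)$ is unchanged.

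The step I expect to need the most care is making the hypotheses of Corollary~\ref{cor:concave-barrier} exactly available. Three points must be checked. First, that $\textup{Div}\,\mathbf A_P$ is a finite measure, which is established above. Second, that $G_P$ has finite measure, which requires $\cH^m(M)<\infty$; otherwise the statement is trivial. Third, that the Borel choice $P\mapsto E_P$ makes $F_V$ measurable, which is already handled in Lemma~\ref{lem:averagedCoercivityGeneral}. The key structural point is that the normalization must put all the dependence on $V$ into the single product $\cL^m(E)^{1/m}|\textup{Div}\,\mathbf A|$. Dilation invariance of that product is exactly what Corollary~\ref{cor:concave-barrier} encodes.
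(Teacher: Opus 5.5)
Your proposal is correct and follows the paper's proof essentially step by step: choose $P$ via Lemma~\ref{lem:averagedCoercivityGeneral}, bound the mass and divergence of $\mathbf A_P$ by rerunning Lemma~\ref{lemma:A-P-projection-properties} with only $\sup_\Gamma\|B_\Psi\|<B_*$, use $\cL^m(G_P)\le\cH^m(M)$, and apply Corollary~\ref{cor:concave-barrier} with $L=C_mB_*$ and $\eta=\kappa_*$. The only difference is cosmetic: you normalize by rescaling $\theta$ so that $\|V\|(\bR^N)=1$, whereas the paper divides the projected measure, setting $\hat{\mathbf A}=\|V\|(\bR^N)^{-1}\mathbf A_P$; these are equivalent by one-homogeneity.
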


The condition~\eqref{eqn:open-condition} is open in the $C^1$ topology.
Indeed, the linear bound~\eqref{eqn:linear-bound} for $\phi$, combined with the property that projection with $E_P$ is non-increasing in operator norm, yields
\begin{align*}
& \Bigl| \int \phi(E_P^t B_\Psi(T)E_P)\,d\rho(P) - \int \phi(E_P^t B_{\tilde\Psi}(T)E_P)\,d\rho(P) \Big| \leq C_\phi \sup_{S\in\bG(N,m)} \|B_\Psi(S)-B_{\tilde\Psi}(S)\|.
\end{align*}
For $\tilde{\Psi}$ $C^1$-close to $\Psi$, the definition~\eqref{eqn:BPsi(T)} of $B_\Psi$ shows that $\sup_{T \in \bG(N,m)} \|B_{\tilde{\Psi}}(T) - B_{\Psi}(T) \| \leq 2 \, \| \tilde{\Psi} - \Psi\|_{C^1}$
Therefore, both inequalities in~\eqref{eqn:open-condition} are stable under $C^1$ perturbations of $\Psi$.

\begin{proof}[Proof of Theorem~\ref{thm:most-general-michael-simon}]
Let us abbreviate $\kappa := \kappa(\Psi, \Gamma, \phi, \rho) > \kappa_*$.
Using Lemma~\ref{lem:averagedCoercivityGeneral}, we obtain an $m$-plane $P \in \bG(N,m)$ such that $\Phi( \mathbf{A}_P) \geq \kappa \|V\|(\bR^N) > \kappa_* \|V\|(\bR^N)$.
We define a rescaled vector-valued measure $\hat{\mathbf{A}} := \|V\|(\bR^N)^{-1} \mathbf{A}_P$, so $\Phi( \hat{\mathbf{A}}) > \kappa_*$ by homogeneity.
Using Lemma~\ref{lemma:A-P-projection-properties}, $T_x M \in \Gamma$, and $\sup_{T\in\Gamma}\|B_\Psi(T)\| <B_*$, we can control
\[
|\on{Div}\widehat{\mathbf{A}}|(\bR^m) \leq \|V\|(\bR^N)^{-1}|\delta_\Psi V|(\bR^N), \qquad |\widehat{\mathbf{A}}|(\bR^m) \leq C_m B_*.
\]
Moreover, $\widehat{\mathbf{A}}$ is concentrated on a Borel set
$G_P\supset\pi_P(M)$ with $\cL^m(G_P) = \cL^m(\pi_P(M)) \leq \cH^m(M)$.
We can now apply Corollary~\ref{cor:concave-barrier} with $\eta=\kappa_*$ and $L = C_m B_*$ to produce $c(m,\phi,B_*,\kappa_*)>0$ with
\[
c \leq \cL^m(G_P)^{\frac1m} |\on{Div}\widehat{\mathbf{A}}|(\bR^m) \leq \cH^m(M)^{\frac1m} \|V\|(\bR^N)^{-1} |\delta_\Psi V|(\bR^N).
\]
Our assertion follows by rearranging terms.
\end{proof}
\begin{proof}[Proof of Theorem~\ref{thm:anisotropic-MS}]
This result follows from Theorem~\ref{thm:most-general-michael-simon} after specializing to $\Gamma = \bG(N,m)$, $\rho = dP$ the Haar measure, and $\phi = \lambda_m$.
Indeed, using the $1$-Lipschitz property of $\lambda_m$ in the definition~\eqref{eqn:kappa-Psi-Gamma-phi-rho} of $\kappa$, together with the definition~\eqref{eqn:d-N-m-beta} of the constant $a_{N,m}$, we obtain
\[
\kappa ( \Psi, \bG(N,m), \lambda_m , dP) \geq a_{N,m} - \sup_{T \in \bG(N,m)} \| B_{\Psi}(T) - T \|
\]
where $a_{N,m} \geq \binom{N}{m}^{-1}$.
Taking $\ve_{N,m} = \frac{1}{2} a_{N,m}$ therefore allows us to apply Theorem~\ref{thm:most-general-michael-simon} whenever $\sup \|B_{\Psi}(T) - T \| \leq \ve_{N,m}$, proving the first bound for $C(N,\lambda_m, \ve_{N,m}, B_{N,m}) = C_N$ a dimensional constant.
If $\Theta^m( \|V\|, x) \geq \theta_0$ for $\|V\|$-a.e.~$x$, then $\cH^m (M) \leq \theta_0^{-1} \|V\|(\bR^N)$ and $\cH^m(M)^{\frac{1}{m}} \leq \theta_0^{- \frac{1}{m}} \|V\|(\bR^N)^{\frac{1}{m}}$.
The second inequality follows from substituting this bound above.
\end{proof}
The constant $a_{N,m}$ is computable using Selberg integrals; see, for example,~\cite{dtmf}*{Ch.~35}.
In~\cite{anisotropic-surfaces}*{Lemma~4.2}, we compute $a_{N,2}$ explicitly and prove that $a_{N,2} > \frac{4-\pi}{2N}$ for every $N \geq 3$.

\begin{remark}\label{rmk:jensen}
We collect here some remarks on Theorem~\ref{thm:most-general-michael-simon}.
\begin{enumerate}[$(i)$]
    \item The freedom to consider Borel subsets $\Gamma \subset \bG(N,m)$ in Theorem~\ref{thm:most-general-michael-simon} implies Michael-Simon inequalities for varifolds whose tangent plane map $x \mapsto T_x V$ is confined inside $\bG(N,m)$.
For example, the result holds for varifolds whose tangent planes lie $\|V\|$-a.e.~ in $\Gamma \subset \bG(N,m)$ with 
\[
\tau := \inf_{T \in \Gamma} \min_{p \in P_0 \cap \bS^{N-1}} |\pi_T p|^2 > 0, \qquad P_0 \in \bG(N,m) \quad \text{a fixed $m$-plane}
\]
and every anisotropy $\Psi$ such that $\sup_{T \in \Gamma} \|B_{\Psi}(T) - T \| <\tau$.
In particular, one can take $\Gamma$ to be the family of $m$-planes that are linear graphs with uniformly bounded slope over $P_0$.
    \item By Jensen's inequality, it is clear that Theorem~\ref{thm:most-general-michael-simon} continues to hold whenever there exists a collection of concave barriers $\{ \phi_\alpha \}_{\alpha=1}^J$, planes $\{ P_\alpha \}_{\alpha=1}^J \in \bG(N,m)$, and isometries $E_\alpha: \bR^m \to P_\alpha$ such that $\inf_{T \in \Gamma} \sum_{\alpha=1}^J \phi_\alpha (E^t_\alpha B_{\Psi}(T) E_\alpha) > 0$.
    \item By item $(ii)$, the results of Section~\ref{section:measure-theory} extend to matrix-valued measures in $\bR^{N \times m}$, with concave barriers satisfying $\phi(A) \leq 0$ on
    \[
    \Lambda_{\textup
    Div} = \{ A : \exists \; \xi \neq 0 : A \xi = 0 \} = \{ A : \textup{rank} \, A < m \},
    \]
    which is the wave cone for row-wise divergence on these matrices.
    The results of~\cite{vanishing-mass-conjecture}*{\S 3} apply to general first-order operators with constant coefficients, so Corollary~\ref{cor:concave-barrier} transfers directly to $N \times m$-matrix-valued measures.
    Denoting by $\Phi(\mathbf{A})$ the associated function on matrix-valued measures, defined as in~\ref{eqn:Lambda-m-definition}, we again obtain, as in Corollary~\ref{cor:concave-barrier}, 
    \begin{equation}\label{eqn:quantitative-compactness}
        |\mathbf{A}|(\bR^m) \leq L, \quad \Phi(\mathbf{A}) \geq \eta>0, \quad \implies \quad \cL^m(E)^{\frac{1}{m}} |\textup{Div}\, \mathbf{A}(\bR^m)| \geq c(N,m,\phi,L,\eta)>0.
    \end{equation}
    Consequently, the result of item $(ii)$ also holds for restrictions $B_{\Psi}(T)|_P : P \to \bR^N$ of the stress tensor, with corresponding $\bR^{N \times m}$-valued measure $\mathbf{A}_P := (\pi_P)_{\#} [ B_{\Psi}(T)|_P \, dV(x,T)]$. 
\end{enumerate}
\end{remark}

We now prove Theorem~\ref{thm:most-important-application} and discuss interesting applications of Theorem~\ref{thm:most-general-michael-simon}.
\begin{proof}[Proof of Theorem~\ref{thm:even-convex}]
Let $\Psi_0 \in C^1( \bG(m+1,m), (0,\infty))$ have convex even one-homogeneous extension $F_0( \nu) = |\nu| \, \Psi_0(\nu^{\perp})$ with $F_0(0)=0$.
For each hyperplane $P$, we define
\[
\phi_P(A) := \inf_{ P \cap \{ F_0(z) = 1 \} } dF_0(z) [Az], \qquad A \in \textup{Hom}(P, \bR^N).
\]
This is a concave, positively one-homogeneous Lipschitz function, as the infimum of a family of linear functionals.
If $\textup{rank} \, A<m$, choosing $z \in \ker A$ with $F_0(z)=1$ shows that $\phi_P(A) \leq 0$, so $\phi_P$ is a concave barrier for $\bR^{N \times m}$-valued measures as in Remark~\hyperref[rmk:jensen]{\ref{rmk:jensen}$ \,(iii)$}.
Since $F_0$ is convex, even, and homogeneous, it satisfies $|dF_0(\nu) [y]| \leq F_0(y)$ for $\nu \neq 0$, by the discussion of~\eqref{eqn:F-star-dF-nu}.
Consequently, 
\[
dF_0(z) [ B_{F_0}(\nu) z] = F_0(\nu) - dF_0(\nu)[z] dF_0(z)[\nu] \geq 0
\]
for $F_0(z)=1$.
Thus, $\phi_P( B_{F_0}(\nu)|_P) \geq 0$ for every $P,\nu$.
Because $dF_0(\nu)[\nu] = F_0(\nu) > 0$ for $\nu \in \bS^{N-1}$, $P_{\nu} := \ker dF_0(\nu)$ is a hyperplane, and $B_{F_0}(\nu) z = F_0(\nu) z$ for $z \in P_{\nu}$.
Now, $\phi_{P_{\nu}}(B_{F_0}(\nu)|_{P_{\nu}}) = F_0(\nu) > 0$, so continuity and compactness produce finitely many hyperplanes $\{ P_i \}_{i=1}^J$ such that
\[
\kappa := \min_{\nu \in \bS^{N-1}} \sum_{i=1}^J \phi_i (B_{F_0}(\nu)|_{P_i}) > 0, \qquad \text{for } \; \phi_i := \phi_{P_i}.
\]
Here, positivity follows because every summand is non-negative by $\phi_P(B_{F_0}(\nu)|_P) \geq 0$ for all $P,\nu$, and for every $\nu$, there exists at least one $P_i$ producing strict inequality.
Moreover, given any other anisotropy $\Psi \in C^1(\bG(m+1,m))$ and letting $L_i$ denote the Lipchitz constant of $\phi_i$, we obtain 
\[
\sum_{i=1}^J \phi_i (B_{\Psi}(T)|_{P_i}) \geq \sum_{i=1}^J \phi_i ( B_{\Psi_0}(T)|_{P_i}) - \left( \sum_{i=1}^J L_i \right) \sup_{S \in \bG(m+1,m)} \| B_{\Psi}(S) - B_{\Psi_0}(S) \|.
\]
The latter term satisfies $\sup_S \|B_{\Psi}(S) - B_{\Psi_0}(S) \| \leq 2 \, \| \Psi - \Psi_0 \|_{C^1}$, so we can choose $\ve_0 = \ve_0(N,\Psi_0)$ sufficiently small so that $\sup_{T \in \bG(N,m)} \|B_{\Psi}(T) \| \leq C(N,\Psi_0)$, the above expression is at least $\frac{1}{2} \kappa$, and $\Psi$ remains positive.
The result now follows from items $(ii)$ and $(iii)$ of Remark~\ref{rmk:jensen}.
\end{proof}

\begin{proof}[Proof of Theorem~\ref{thm:most-important-application}]
We follow the framework of~\cite{derosa-tione-regularity}*{\S 6} to describe the $\ell^p$ norm on the Grassmannian $\bG(N,2)$ in terms of the Pl\"ucker embedding by Euclidean simple bivectors, when $p \in (1,\infty)$.
For $T \in \bG(N,2)$, we choose a Euclidean unit simple bivector 
\[
\tau_T = \sum_{1 \leq i < j \leq N} \tau_{ij}\, e_i \wedge e_j, \qquad |\tau|_2 = 1
\]
spanning $T$, and denote by $\Psi_p$ the $\ell^p$ anisotropy, namely 
\begin{equation}\label{eqn:plucker-norm}
    \textstyle \Psi_p(T) := \bigl( \sum_{i<j} |\tau_{ij}|^p \bigr)^{\frac{1}{p}}.
\end{equation}
For each coordinate $2$-plane $P_{ij} := \textup{span} \{ e_i, e_j\}$, let $B_{ij}(T)$ denote the projection of the stress tensor $B_{\Psi}(T)$ onto $P_{ij}$, namely $B_{ij} := E_{ij}^t B_{\Psi}(T) E_{ij}$ under a choice of isometry $E_{ij} : \bR^2 \to P_{ij}$.
By the criterion of Remark~\ref{rmk:jensen}, to prove the result, it suffices to find a concave barrier $\phi_p$ satisfying
\begin{equation}\label{eqn:ell-p-inequality}
    \inf_{T \in \bG(N,2)} \sum_{i<j} \phi_p ( B_{ij}(T)) \geq c_{N,p} > 0
\end{equation}
where $c_{N,p} = \binom{N}{2}^{ \min \{ \frac{1}{p} - \frac{1}{2}, 0 \} }$.
To construct the barrier $\phi_p$, we define $J_p(z) := ( |z_1|^{p-2} z_1, |z_2|^{p-2} z_2)$ for a $2$-vector $z = (z_1, z_2)$ and let
\[
\phi_p (A) := \inf_{ \|z\|_p = 1 } \la Az, J_p( z) \rg, \qquad A \in \bR^{2 \times 2}.
\]
We claim that $\phi_p$ is a concave barrier and the $2$-vector $J_p(z)$ has
\begin{equation}\label{eqn:phip-Jp}
    \la v, w \rg \la J_p(v), J_p(w) \rg \geq 0 \qquad \text{for every } \; v,w \in \bR^2.
\end{equation}
For fixed $\|z\|_p=1$, the map $A \mapsto \la Az , J_p(z) \rg$ is linear, so $\phi_p$ is concave and positively one-homogeneous.  
If $\det A=0$, choosing $z \in \ker A$ with $\|z\|_p = 1$ shows that $\phi_p(A) \leq 0$.
To prove~\eqref{eqn:phip-Jp}, we write $a = v_1 w_1$ and $b = v_2 w_2$, so
\[
\la v,w \rg = a+b, \qquad \la J_p(v) , J_p(w) \rg = |a|^{p-2} a + |b|^{p-2} b.
\]
If $ab \geq0$, the assertion is immediate.
If $ab<0$, both $a+b$ and $|a|^{p-2} a + |b|^{p-2} b$ have the sign of the term with the larger absolute value, since the function $t \mapsto t^{p-1}$ is strictly increasing on $(0,\infty)$, for $p \in (1,\infty)$.
Therefore, $\la v,w \rg \la J_p(v) , J_p(w) \rg \geq 0$ have non-negative product.

Let $\tilde{\Psi}(\tau)$ be the function on $\bigwedge^2 \bR^N$ corresponding to~\eqref{eqn:plucker-norm}, so that $\Psi_p(T) = \tilde{\Psi}(\tau_T)$.
Since $\tilde{\Psi}$ is even, this definition is independent of the choice of orientation for $T$.
We extend the coefficients antisymmetrically by $\tau_{ji} = - \tau_{ij}$ and $\tau_{ii}=0$.
For $L \in \bR^{N \times N}$, we write $L(u \wedge v) := Lu \wedge v+ u \wedge Lv$.
Differentiating the one-homogeneous extension at $t=0$ gives
\begin{equation}\label{eqn:BPsi(T)-L-definition}
    \begin{split}
\frac{d}{dt} \Bigr\rvert_{t=0} \tilde{\Psi}( (I + tL) \tau_T) = \la B_{\Psi_p}(T), L \rg &= \sum_{i,j=1}^N ( B_{\Psi_p}(T))_{ab} L_{ab}, \\\text{where} \qquad (B_{\Psi_p}(T))_{ij} &:= \tilde{\Psi}(\tau)^{1-p} \sum_{k=1}^N |\tau_{ik}|^{p-2} \tau_{ik} \tau_{jk}.
    \end{split}
\end{equation}
Here, we used the fact that $D \tilde{\Psi}(\tau) [ \zeta] = \tilde{\Psi}(\tau)^{1-p} \sum_{i<j} J_p (\tau_{ij}) \zeta_{ij}$ and changed the indices and order of summation.
Notice that~\eqref{eqn:BPsi(T)-L-definition} recovers $B_{\Psi_2}(T) = T$ for $p=2$.
The expression~\eqref{eqn:BPsi(T)-L-definition} implies that the projected stress $B_{ij}(T)$ onto $P_{ij} = \textup{span} \{ e_i, e_j \}$ satisfies
\begin{equation}\label{eqn:projected-stress}
    B_{ij}(T) = \tilde{\Psi}(\tau)^{1-p} \Bigl( |\tau_{ij}|^p I_2 + \sum_{ k \not\in \{ i,j \} } J_p (v^{ij}_k) \otimes v^{ij}_k \Bigr)
\end{equation}
where we denoted $v^{ij}_k := (\tau_{ik}, \tau_{jk}) \in \bR^2$ for $k \not\in \{ i,j\}$.
Using $\la z, J_p(z) \rg = 1$ for $\|z\|_p =1$, we obtain
\begin{align*}
    \la B_{ij}(T) z, J_p(z) \rg &= \tilde{\Psi}(\tau)^{1-p} \Bigl[ |\tau_{ij}|^p + \sum_{k \not\in \{i,j\} } \la z, v^{ij}_k \rg \la J_p(z), J_p(v^{ij}_k) \rg \Bigr] \geq \tilde{\Psi}(\tau)^{1-p} |\tau_{ij}|^p.
\end{align*}
Here, we applied the inequality $\la z,v\rg \la J_p(z), J_p(v) \rg \geq 0$ from~\eqref{eqn:phip-Jp}.
Taking the infimum over $\|z\|_p=1$ gives $\phi_p(B_{ij}(T)) \geq \tilde{\Psi}(\tau)^{1-p} |\tau_{ij}|^p$ for every $i,j$, and hence
\[
\sum_{i<j} \phi_p ( B_{ij}(T)) \geq \tilde{\Psi}(\tau)^{1-p} \sum_{i<j} |\tau_{ij}|^p = \tilde{\Psi}(\tau).
\]
Finally, using $|\tau_T|_2 = 1$, we observe that $\tilde{\Psi}(\tau) = \| \tau_T\|_p \geq c_{N,p} := \binom{N}{2}^{ \min \{ \frac{1}{p} - \frac{1}{2}, 0\} }$.
This establishes the inequality~\eqref{eqn:ell-p-inequality} and completes the proof.
\end{proof}

\begin{remark}\label{rmk:ell-p-general}
By Theorem~\ref{thm:most-general-michael-simon}, establishing the Michael-Simon inequality for the $\ell^p$ anisotropy in every dimension and codimension is reduced to producing a collection of concave barriers $\{ \phi_I \}_{|I|=m}$, associated to every Pl\"ucker $m$-vector, so that the inequality~\eqref{eqn:ell-p-inequality} holds for the projected stress tensors $B_I := E^t_I B_{\Psi}(T) B_I$.
We expect that Theorem~\ref{thm:most-important-application} should indeed extend to the $\ell^p$ norm for every $N,m,p$, with the appropriate choice of barriers.
Our previous choice of concave barrier amounts to $\phi_p(A) := \inf_{ \| z\|_p= 1 } \la Az, J_p(z) \rg$, where $J_p(z) = ( |z_1|^{p-2} z_1, \dots, |z_m|^{p-2} z_m)$ for an $m$-vector $z = (z_1, \dots, z_m)$.
This appears to fail the condition~\eqref{eqn:ell-p-inequality} for $m \geq 3$ and $N,p$ are sufficiently large.
\end{remark}

\bibliography{ref}

\end{document}